\newtheoremstyle{rem}{1.3ex}{1.3ex}{\rmfamily}{}
{\itshape\rmfamily}{}{1.5ex}{}
\newtheorem{theorem}{Theorem}[section]
\newtheorem{lemma}[theorem]{Lemma}
\theoremstyle{definition}
\newtheorem{remark}[theorem] {Remark}
\renewcommand{\section}{\secdef\sct\sect}
\newcommand{\sct}[2][default]{\refstepcounter{section}
\setcounter{equation}{0}
\vspace{0.5cm}
\centerline{ \large
\scshape \arabic{section}.\ #1}
\vspace{0.3cm}}
\newcommand{\sect}[1]{
\vspace{0.5cm}
\centerline{\large\scshape #1}
\vspace{0.3cm}}
\renewcommand{\subsection}{\secdef \subsct\sbsect}
\newcommand{\subsct}[2][default]{\refstepcounter{subsection}
\nopagebreak
\vspace{0.5\baselineskip}
{\flushleft\bf \arabic{section}.\arabic{subsection}~\bf #1  }
\nopagebreak}
\newcommand{\sbsect}[1]{\vspace{0.1cm}\noindent
{\bf #1}\vspace{0.1cm}}
\def\phi{\varphi }
\newcommand{\C}     {\mathbb{C}}
\newcommand{\R}     {\mathbb{R}}
\newcommand{\N}     {\mathbb{N}}
\newcommand{\E}     {\mathbb{E}}
\newcommand{\V}     {\mathbb{V}}
\def\1{{\mathchoice {1\mskip-4mu\mathrm l}
                    {1\mskip-4mu\mathrm l}
                    {1\mskip-4.5mu\mathrm l} {1\mskip-5mu\mathrm l}}}
\begin{document}

\title[Moderate deviations for the determinant]{\large
Moderate deviations for\\\vspace{2mm}the determinant\\\vspace{5mm}of Wigner matrices}

\author[Hanna D\"oring and Peter Eichelsbacher]{} 
\maketitle

\thispagestyle{empty}
\vspace{0.2cm}

\centerline{\sc Hanna D\"oring\footnote{Ruhr-Universit\"at Bochum, Fakult\"at f\"ur Mathematik,
NA 4/68, D-44780 Bochum, Germany, {\tt hanna.doering@ruhr-uni-bochum.de}}, Peter Eichelsbacher\footnote{Ruhr-Universit\"at Bochum, Fakult\"at f\"ur Mathematik,
NA 3/66, D-44780 Bochum, Germany, {\tt peter.eichelsbacher@ruhr-uni-bochum.de}
\\The second author has been supported by Deutsche Forschungsgemeinschaft via SFB/TR 12.}} 

\vspace{0.5cm}
\centerline{{\it Dedicated to Friedrich G\"otze on the occasion of his sixtieth birthday}}

\vspace{2 cm}

\begin{quote}
{\small {\bf Abstract:} }
We establish a moderate deviations principle (MDP) for the log-determinant $\log | \det (M_n) |$ of a Wigner matrix $M_n$ matching four moments with either the GUE or GOE ensemble. Further we establish Cram\'er--type moderate deviations and Berry-Esseen bounds for the log-determinant for the GUE and GOE
ensembles as well as for non-symmetric and non-Hermitian Gaussian random matrices (Ginibre ensembles), respectively. 
\end{quote}

\bigskip\noindent
{\bf AMS 2000 Subject Classification:} Primary 60B20; Secondary 60F10, 15A18 

\medskip\noindent
{\bf Key words:} Moderate deviations, determinant, Wigner random matrices, Gaussian ensembles, Ginibre ensembles, cumulants, Four Moment Theorem

\newpage
\setcounter{section}{0}

\section{Introduction}
In random matrix theory, the {\it determinant} is naturally an important functional. The study of determinants of random matrices has a long history.
The earlier papers focused on the determinant $\det A_n$ of a non-Hermitian iid matrix $A_n$, where the entries of the matrix were independent random variables with mean 0 and variance 1. Szekeres and Tur\'an \cite{SzekeresTuran:1937} studied an extremal problem. Later, in a series of papers moments
of the determinants were computed, see \cite{Prekopa:1967} and \cite{Dembo:1989} and references therein. In \cite{TaoVu:2006}, Tao and Vu proved for Bernoulli random matrices, that with probability tending to one as $n$ tends to infinity
\begin{equation} \label{LLNiidAn}
\sqrt{n!} \exp(-c \sqrt{n \log n}) \leq | \det A_n | \leq \sqrt{n!} \, \, \omega(n)
\end{equation}
for any function $\omega(n)$ tending to infinity with $n$. This shows that almost surely, $\log | \det A_n|$ is $( \frac 12 + o(1)) n \, \log n$.
In \cite{Goodman:1963}, Goodman considered the random Gaussian case, where the entries of $A_n$ are iid standard real Gaussian variables. Here the square of the determinant can be expressed as a product of independent chi-square variables and it was proved that
\begin{equation} \label{CLTiidAn}
\frac{ \log (|\det A_n|) - \frac 12 \log n! + \frac 12 \log n }{ \sqrt{ \frac 12 \log n}} \to N(0,1)_{\R},
\end{equation}
where $N(0,1)_{\R}$ denotes the real standard Gaussian (convergence in distribution). A similar analysis also works for complex Gaussian matrices, in which
the entries remain jointly independent but now have the distribution of the complex Gaussian $N(0,1)_{\C}$. In this case
a slightly different law holds true:
\begin{equation} \label{CLTiidAnC}
\frac{ \log (|\det A_n|) - \frac 12 \log n! + \frac 14 \log n }{ \sqrt{ \frac 14 \log n}} \to N(0,1)_{\R}.
\end{equation}
Girko  \cite{Girko:1979} stated that \eqref{CLTiidAn} holds for real iid matrices under the assumption that the fourth moment of the atom variables is 3. In \cite{Girko:1998}
he claimed the same result under the assumption that the atom variables have bounded $(4 + \delta)$-th moment. 
Recently, Nguyen and Vu \cite{NguyenVu:2011} gave a proof for \eqref{CLTiidAn} under an exponential decay hypothesis on the entries. They also present an estimate for the rate of convergence, which is that
the Kolmogorov distance of the distribution of the left hand side of \eqref{CLTiidAn} and the standard real Gaussian can be bounded by $\log^{- \frac 13 +o(1)} n$. In our paper we will be able to improve the bound to $\log^{-\frac 12} n$ in the Gaussian case. 

In the non-Hermitian iid model $A_n$ it is a crucial fact that the rows of the matrix are jointly independent.  This independence no longer holds true for {\it Hermitian} random matrices, which makes the analysis of determinants of Hermitian random matrices more challenging. The analogue of \eqref{LLNiidAn}
for Hermitian random matrices was first proved in \cite[Theorem 31]{Tao/Vu:2009} as a consequence of the famous Four Moment Theorem. Even in the Gaussian case, it is not simple to prove an analogue of the Central Limit Theorem (CLT) \eqref{CLTiidAnC}. The observations in \cite{Goodman:1963} do not apply due to the dependence between the rows. In \cite{MehtaNormand:1998} and in \cite{DelannayLeCaer:2000}, the authors computed the moment generating function of the log-determinant for the Gaussian unitary and Gaussian orthogonal ensembles, respectively,  and discussed the central limit theorem via the method of cumulants (see \cite[equation (40) and Appendix D]{DelannayLeCaer:2000}):
consider a Hermitian $n \times n$ matrix $X_n$ in which the atom distribution $\zeta_{ij}$ are given by the complex Gaussian $N(0,1)_{\C}$ for $i < j$ and the real Gaussian $N(0,1)_{\R}$ for $i=j$ (which is called the Gaussian Unitary Ensemble (GUE)). The calculations in  \cite{DelannayLeCaer:2000} 
should imply a Central Limit Theorem (see Remark \ref{RemarkDC} in our paper):
\begin{equation} \label{CLTiidMnC}
\frac{ \log (|\det X_n|) - \frac 12 \log n! + \frac 14 \log n }{ \sqrt{ \frac 12 \log n}} \to N(0,1)_{\R},
\end{equation}
Recently, Tao and Vu \cite{TaoVu:2011} presented a different approach to prove this result approximating the $\log$-determinant as a sum of weakly dependent
terms, based on analyzing a tridiagonal form of the GUE due to Trotter \cite{Trotter:1984}. They have to apply stochastic calculus and a martingale central limit theorem to get their result. This method is quite different and also quite involved. More important for us, the techniques due to Tao and Vu seem not to be applicable to get finer asymptotics  like Cram\'er--type moderate deviations, Berry-Esseen bounds and moderate deviations principles. The reason for this is 
the quality of the approximation by a sum of weakly dependent terms they have chosen is not sharp enough. Let us emphasize that Tao and Vu
proved the CLT \eqref{CLTiidMnC} for certain Wigner matrices,  generating a Four Moment Theorem for determinants.

The aim of our paper is to use a closed formula for the moments of the determinant of a GUE matrix, giving at the same time a
closed formula for the cumulant generating function of the log-determinant. We will be able to present good bounds for all cumulants.
As a consequence we will obtain Cram\'er--type moderate deviations, Berry-Esseen bounds and moderate deviation principle (for definitions see Section 2)
for the log-determinant of the GUE, improving results in \cite{DelannayLeCaer:2000} and \cite{TaoVu:2011}. Moreover we will obtain similar results for the GOE ensemble. 
Good estimates on the cumulants imply such results. To do so we apply a celebrated lemma of the theory of large deviations probabilities due to Rudzkis, Saulis and Statulevi{\v{c}}ius \cite{RSS:1978}, \cite{SaulisStratulyavichus:1989} as well as results on moderate deviation principles via cumulants due to the authors \cite{DoeringEichelsbacher:2010}. 
Applying the recent Four Moment theorem for determinants due to Tao and Vu \cite{TaoVu:2011}, we are able to prove the moderate deviation principle
and Berry-Esseen bounds for the log-determinant for Wigner matrices matching four moments with either the GUE or GOE ensemble. Moreover 
we will be able to prove moderate deviations results and will improve the Berry-Esseen type bounds in \cite{NguyenVu:2011} in the cases of non-symmetric and non-Hermitian Gaussian random matrices, called Ginibre ensembles.

Remark that the first universal result of a moderate deviations principle was proved in \cite{DoeringEichelsbacher:2011} and  \cite{DoeringEichelsbacher:2012} for the number of eigenvalues of a Wigner matrix, based on fine asymptotics of the variance of the
eigenvalue counting function of GUE matrices, on the Four Moment theorem and on localization results.

\section{Gaussian ensembles and Wigner matrices}

Among the ensembles of $n \times n$ random matrices $X_n$, Gaussian orthogonal and unitary ensembles have been studied extensively and are still being investigated.
Their probability densities are proportional to $\exp ( - \operatorname{tr}(X_n^2))$, where $\operatorname{tr}$ denotes the trace. Matrices are real symmetric
for the Gaussian orthogonal ensemble (GOE) and  Hermitian for the Gaussian unitary ensemble (GUE).
The joint distributions of eigenvalues for the Gaussian ensembles are (\cite[Theorem 2.5.2]{Zeitounibook}, \cite[Chapter 3]{Mehta:RandomMatrices})
\begin{equation} \label{jointdensityGE} 
P_{n, \beta} (\lambda_1, \ldots, \lambda_n) := \frac{1}{Z_{n, \beta}} \exp \biggl( - \frac{\beta}{4} \sum_{i=1}^{n} \lambda_i^2 \biggr) \prod_{1 \leq j < k \leq n} |\lambda_j - \lambda_k |^{\beta},
 \end{equation}
where $\beta = 1,2$ for the orthogonal and unitary ensembles, respectively, and $Z_{n, \beta}$ is the normalizing constant, sometimes called the Mehta integral (see \cite[Theorem 2.5.2, formula (2.5.4), and Corollay 2.5.9, Selberg's integral formula]{Zeitounibook}).

Let us denote by $X_n^{\beta}$ the random matrices of the two Gaussian ensembles.
We are interested in the moments of $| \det X_n^{\beta}| $ for these ensembles, that is
$$
M_{n, \beta}(s) := \langle | \det X_n^{\beta} |^s \rangle_{\beta} : = \int_{\R^n} P_{n, \beta} (\lambda_1, \ldots, \lambda_n) \prod_{i=1}^n | \lambda_i| ^s \, d \lambda_i.
$$
All information about the distribution of $\log | \det X_n^{\beta}|$ can be obtained from the generating function $M_{n, \beta}(s)$. The moments of
$\log | \det X_n^{\beta}|$ may be obtained from the coefficients in the Taylor expansion of $M_{n, \beta}$ evaluated at $s=0$,
$$
M_{n, \beta}(s)= \sum_{j \geq 0} \frac{ \langle  (\log | \det X_n^{\beta} |)^j \rangle_{\beta}}{j!} \, s^j,
$$
the corresponding cumulants $ \Gamma_j(n, \beta):=(-i)^j \frac{d^j}{dt^j} \log \E\bigl[e^{i t \log | \det X_n^{\beta}|}\bigr] \bigr|_{t=0}$ 
are related to the Taylor coefficients of $\log  M_{n, \beta}$ via
$$
\log M_{n, \beta}(s) = \sum_{j \geq 0} \frac{\Gamma_j(n, \beta)}{j!} \, s^j.
$$
In the literature the {\it Mellin transform} of the probability density of $|\det X_n^{\beta}|$ was calculated for the Gaussian ensembles, giving an
explicit  formula for $M_{n, \beta}(s)$. To be more precise, if $g_{n, \beta}(\cdot)$ denotes the probability density of the determinant of a GOE or a GUE matrix
and $g^+_{n, \beta}(y) := \frac 12 (g_{n, \beta}(y) + g_{n, \beta}(-y))$ be the even part, the Mellin transform of $g^+_{n, \beta}$ is defined by
$$
{\mathcal M}_{n, \beta}(s) := \int_0^{\infty} y^{s-1} g^+_{n, \beta}(y) dy.
$$
For the GOE and GUE ensembles we obtain
$$
{\mathcal M}_{n, \beta}(s)= \frac 12 \int_{-\infty}^{\infty} \cdots \int_{-\infty}^{\infty} P_{n, \beta}(\lambda_1, \ldots, \lambda_n) |\lambda_1 \cdots \lambda_n|^{s-1}
d \lambda_1 \cdots d\lambda_n
$$
and an obvious consequence is the relation 
\begin{equation} \label{MellinMoment}
M_{n, \beta}(s) = 2 {\mathcal M}_{n, \beta}(s+1).
\end{equation}
It is quite involved to calculate the Mellin transform even for the Gaussian ensembles. 
The case $\beta =1$ was calculated in \cite[formulas (31),(19) and (26)]{DelannayLeCaer:2000} (see also \cite[Chapter 26.5]{Mehta:RandomMatrices}).
Here the Mellin transform is a Pfaffian of an anti-symmetric matrix applying the method of (skew) orthogonal polynomials. With \eqref{MellinMoment}, for 
$n = 2p +1$ one obtains
\begin{equation} \label{moment1}
M_{2p+1, 1}(s) = 4^{n s/2} \prod_{m=1}^n \frac{\Gamma( \frac{s}{2} + \frac 12 + b_m^1)}{\Gamma( \frac 12 + b_m^1)}
\end{equation}
with $b_m^1 := \frac 12 \lfloor \frac{m-1}{2} \rfloor + \frac 14$. If $n=2p$ one obtains
\begin{equation} \label{moment1b}
M_{2p, 1}(s) = 2^{\frac{(n+1)s}{2}} F\biggl(\frac{s+1}{2}, - \frac{s}{2}; \frac{n+1+s}{2}; \frac 12\biggr)  \frac{\Gamma((s+1)/2) \Gamma((n+1)/2)}{\Gamma(\frac 12) 
\Gamma((n+1+s)/2)} \prod_{m=1}^p \frac{\Gamma( s+m + \frac 12)}{\Gamma( m +\frac 12)},
\end{equation}
where $F$ is the (Gau\ss{}) hypergeometric function
\begin{equation} \label{hypergeometric}
F( a,b;c ; z) := \sum_{m=0}^{\infty} \frac{(a)^{(m)} (b)^{(m)}}{(c)^{(m)}} \frac{z^m}{m!}
\end{equation}
with $(x)^{(m)} :=x(x+1)(x+2) \cdots (x+m-1)$ denoting the Pochhammer symbol. $F$ is convergent for arbitrary $a,b,c$ and for real $-1 < z < 1$. 
In \cite{AndrewsGouldenJackson:2003}, an alternative derivation for \eqref{moment1} and \eqref{moment1b} is presented using terminating hypergeometric series.
The case $\beta=2$ was calculated in \cite[Section 2]{MehtaNormand:1998}. Here a knowledge of determinants and orthogonal polynomials is needed.
One obtains 
\begin{equation} \label{moment2}
M_{n, 2}(s) = 2^{n s/2} \prod_{m=1}^n \frac{\Gamma( \frac{s}{2} + \frac 12 + b_m^2)}{\Gamma( \frac 12 + b_m^2)}
\end{equation}
with $b_m^2 = \lfloor \frac m2 \rfloor$. 
As a consequence of \eqref{moment2} we obtain the following results for the cumulants $\Gamma_j(n,2)$  of $\log | \det X_n^{2} |$:

\begin{lemma}[Bounds for the cumulants of $\log |\det X_n^2|$, GUE] \label{cumulantsGUE}
For the Gaussian unitary ensemble $\beta=2$ we obtain 
$$
\Gamma_1(n,2) = - \frac n2 ( 1 + \log 2) + \frac n2 \log \bigl( 2 \lfloor n/2 \rfloor \bigr) + \operatorname{const} + O(1/n)
$$
and
$$
\sigma_2^2 := \Gamma_2(n,2)= \frac 12 \log \bigl( 2 \lfloor n/2 \rfloor \bigr)
 + \frac 12 (\gamma +  \log 2 +1) + O(1/n),
$$
where $\gamma$ denotes the Euler-Mascheroni constant.
Moreover for any $j \geq 3$ we have
\begin{equation} \label{cumbound1}
\big| \Gamma_j(n,2) \big| \leq \operatorname{const} \, j! .
\end{equation}
\end{lemma}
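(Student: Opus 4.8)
The plan is to read the cumulants off directly from the closed form \eqref{moment2}. Taking logarithms,
$$
\log M_{n,2}(s) = \frac{ns}{2}\log 2 + \sum_{m=1}^n\Bigl[\log\Gamma\bigl(\tfrac{s}{2}+\tfrac12+b_m^2\bigr)-\log\Gamma\bigl(\tfrac12+b_m^2\bigr)\Bigr],
$$
and since $\log M_{n,2}(s)=\sum_{j\ge0}\frac{\Gamma_j(n,2)}{j!}s^j$ we have $\Gamma_j(n,2)=\frac{d^j}{ds^j}\log M_{n,2}(s)\big|_{s=0}$. Only the first summand is linear in $s$, and $\frac{d^j}{ds^j}\log\Gamma(\tfrac{s}{2}+c)=2^{-j}\psi^{(j-1)}(\tfrac{s}{2}+c)$ for $j\ge1$, where $\psi=(\log\Gamma)'$ is the digamma and $\psi^{(k)}$ the polygamma functions. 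Evaluating at $s=0$ gives
$$
\Gamma_1(n,2)=\frac n2\log2+\frac12\sum_{m=1}^n\psi\bigl(\tfrac12+b_m^2\bigr),\qquad \Gamma_j(n,2)=\frac{1}{2^j}\sum_{m=1}^n\psi^{(j-1)}\bigl(\tfrac12+b_m^2\bigr)\quad(j\ge2).
$$
The decisive structural fact is that, since $b_m^2=\lfloor m/2\rfloor$, the arguments $\tfrac12+b_m^2$ run through the half-integers with the value $\tfrac12$ occurring once (at $m=1$) and each $\tfrac12+k$, $k\ge1$, occurring at most twice, the largest index being $\lfloor n/2\rfloor$.

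For $\Gamma_1$ I would substitute $\psi(x)=\log x-\tfrac{1}{2x}+O(x^{-2})$: the logarithmic part, via the duplication of arguments and Stirling's formula for $\log\Gamma(\lfloor n/2\rfloor+\tfrac12)$, contributes $\tfrac n2\log\lfloor n/2\rfloor$, which together with the prefactor term $\tfrac n2\log 2$ assembles the leading $\tfrac n2\log(2\lfloor n/2\rfloor)$; the subleading harmonic sums arising from $-\tfrac{1}{2x}$ are summed by Euler--Maclaurin and collect into the term of order $n$ and the constant, with remainder $O(1/n)$. For $\sigma_2^2=\Gamma_2$ the leading term arises from $\tfrac14\sum_m(\tfrac12+b_m^2)^{-1}$, and the harmonic-type sum $\sum_k(k+\tfrac12)^{-1}=\psi(\cdot)-\psi(\tfrac32)$ yields $\tfrac12\log(2\lfloor n/2\rfloor)$, while the convergent $O(x^{-2})$ corrections together with the half-integer digamma values (through which the Euler--Mascheroni constant enters, e.g. $\psi(\tfrac12)=-\gamma-2\log2$) assemble the constant $\tfrac12(\gamma+\log2+1)$. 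These two computations are routine once the half-integer summation is set up, the only care needed being the separate treatment of the boundary argument and of the unpaired value $\tfrac12$.

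The essential point is the bound \eqref{cumbound1} for $j\ge3$, uniform in $n$. I would use the exact series $\psi^{(j-1)}(x)=(-1)^{j}(j-1)!\sum_{l\ge0}(x+l)^{-j}$, giving
$$
\bigl|\Gamma_j(n,2)\bigr|\le \frac{(j-1)!}{2^j}\sum_{m=1}^n\sum_{l\ge0}\frac{1}{(\tfrac12+b_m^2+l)^{j}}.
$$
Separating the unpaired term $m=1$ (argument $\tfrac12$) gives $\tfrac{(j-1)!}{2^j}\sum_{l\ge0}(\tfrac12+l)^{-j}=(j-1)!\,(1-2^{-j})\zeta(j)\le(j-1)!\,\zeta(j)$, while the doubly occurring arguments $\ge\tfrac32$ contribute at most $\tfrac{2(j-1)!}{2^j}\sum_{k\ge1}\sum_{l\ge0}(\tfrac12+k+l)^{-j}=2^{1-j}(j-1)!\sum_{q\ge1}q(q+\tfrac12)^{-j}\le 2^{1-j}(j-1)!\,\zeta(j-1)$, where $q=k+l$. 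Since $\zeta(j)$ and $\zeta(j-1)$ are bounded for $j\ge3$ and both series converge independently of $n$, this yields $|\Gamma_j(n,2)|\le\operatorname{const}\cdot(j-1)!\le\operatorname{const}\cdot j!$ with a constant independent of $n$. The main obstacle is precisely this estimate: the prefactor $2^{-j}$ must absorb the factor $2^j-1$ produced by the smallest argument $\tfrac12$ in $\sum_{l\ge0}(\tfrac12+l)^{-j}=(2^j-1)\zeta(j)$, and the remaining double sum over the paired half-integer arguments must be controlled uniformly in $n$, which is where the convergence of $\sum_{q\ge1}q(q+\tfrac12)^{-j}$ for $j\ge3$ is used; by contrast the asymptotics of $\Gamma_1$ and $\Gamma_2$ are comparatively routine once the digamma representation above is in place.
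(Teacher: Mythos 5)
Your proposal is correct and follows the same overall route as the paper's proof: differentiate the closed form \eqref{moment2}, reduce each cumulant to a sum of polygamma values at half-integer arguments (with $\tfrac12$ occurring once and each $\tfrac12+k$, $1\le k\le\lfloor n/2\rfloor$, at most twice), and extract the asymptotics of $\Gamma_1,\Gamma_2$ from the expansion of $\psi,\psi^{(1)}$ together with $\psi(\tfrac12)=-\gamma-2\log 2$. Your treatment of $\Gamma_1$ and $\Gamma_2$ is only sketched (Euler--Maclaurin for the harmonic-type sums rather than the paper's exact identity $\psi(\tfrac12+j)=\psi(\tfrac12)+2\sum_{l=1}^j(2l-1)^{-1}$ followed by summation over $j$), but the ingredients you name are precisely those the paper uses, and they do assemble to the stated leading terms and constants. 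Where you genuinely diverge is on the key uniform bound for $j\ge3$, and there your argument is cleaner: the paper first telescopes the double series $\sum_{i=1}^k\psi^{(j-1)}(\tfrac12+i)$ into boundary polygamma values (its identity \eqref{jallgemein}) and then bounds those via the integral representation \eqref{integral2}, i.e.\ $|\psi^{(j)}(x)|\le j!\,x^{-j}+j!\,x^{-j-1}$; you instead bound the raw double sum directly, splitting off the unpaired argument $\tfrac12$ (contributing $(1-2^{-j})(j-1)!\,\zeta(j)$) and estimating the paired arguments by $2^{1-j}(j-1)!\sum_{q\ge1}q\,(q+\tfrac12)^{-j}\le 2^{1-j}(j-1)!\,\zeta(j-1)$, which is finite for $j\ge3$ and independent of $n$. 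This dispenses with both the telescoping identity and the integral representation and gives the only estimate that feeds the downstream Cram\'er/Berry--Esseen/MDP machinery in a few lines, so I see no gap; the two routes are equally rigorous, with yours slightly shorter on the decisive step and the paper's more explicit on the first two cumulants.
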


\begin{proof}
Let us remark that some of our calculations can be found in \cite{DelannayLeCaer:2000}. We work out all the details to get good bounds 
on the cumulants, which is not the aim in \cite{DelannayLeCaer:2000}.
With $\psi(x) := \frac{d}{dx} \log \Gamma(x)$ we denote the {\it digamma function}. From $\eqref{moment2}$ we obtain
\begin{equation} \label{f1}
\Gamma_1(n,2) = \frac{d}{ds} \log M_{n,2}(s) \bigg|_{s=0} = \frac n2 \log 2 + \frac 12 \sum_{i=1}^n \psi( 1/2 + b_i^2).
\end{equation}
For any $n=2k+1$ we obtain $ \frac 12 \sum_{i=1}^n \psi(1/2 + b_i^2) =  \sum_{j=1}^k \psi(1/2 + j) + \frac 12 \psi(\frac 12)$ and 
for $n=2k$ we have $ \frac 12 \sum_{i=1}^n \psi(1/2 + b_i^2) =  \sum_{j=1}^k \psi(1/2 + j) + \frac 12 \psi(1/2) - \frac 12 \psi( \frac{n+1}{2})$.
With $\Gamma(1+x) = x \Gamma(x)$ it follows that $\psi(1+x) = \psi(x) + \frac 1x$ and therefore recursively
$
\psi(1/2+j) = \psi(1/2) + 2 \biggl( \sum_{l=1}^j \frac{1}{2l-1} \biggr)$, see \cite[Section 1.3, (1.3.9)]{Lebedev:1965}.
Using
$$
2 \sum_{j=1}^k \sum_{l=1}^j \frac{1}{2l-1} = 2(k+1) \sum_{l=1}^k \frac{1}{2l-1} - \sum_{l=1}^k \frac{2l}{2l-1} = (2k+1) \biggl( \sum_{l=1}^{2k} \frac 1l - \sum_{l=1}^k \frac{1}{2l} \biggr) - k
$$
we obtain
$
\sum_{j=1}^k \psi(1/2 + j) = k \psi(1/2) -k + (2k+1) \biggl( \sum_{l=1}^{2k} \frac 1l - \sum_{l=1}^k \frac{1}{2l} \biggr)$.
Applying
\begin{equation} \label{harmonic}
\sum_{l=1}^n \frac 1l = \gamma + \log n + \frac{1}{2n} +O(\frac{1}{n^2}),
\end{equation}
it follows that
$
(2k+1) \biggl( \sum_{l=1}^{2k} \frac 1l - \sum_{l=1}^k \frac{1}{2l} \biggr)= (2k +1) \frac 12 (\gamma + 2 \log 2) + (2k +1) \frac 12 \log k + O(\frac 1k)$. 
With $\psi(1/2) = -2 \log 2 - \gamma$ we have
\begin{equation} \label{spaeter}
\sum_{j=1}^k \psi(1/2 + j) + \frac 12 \psi(1/2) = -k + (k + \frac 12) \log k + O(\frac 1k).
\end{equation}
In the case $n=2k$ we have to consider in addition the term $\frac 12 \psi(1/2 + k) = \frac 12 \log k + O( \frac 1k)$.
Summarizing we obtain for every $n$:
$$
\Gamma_1(n,2) =  - \frac n2 \bigl( \log 2 + 1 \bigr) + \frac n2 \log (2k) + \operatorname{const} + O(1/n).
$$
From $\eqref{moment2}$ and $\eqref{f1}$ we obtain for $n=2k+1$
\begin{equation} \label{f4}
\Gamma_j(n,2) = \frac{d^j}{ds^j} \log M_{n,2}(s) \bigg|_{s=0} = \frac{1}{2^j} \psi^{(j-1)}(1/2) + \frac{1}{2^{j-1}} \sum_{i=1}^k \psi^{(j-1)}(1/2 + i)
\end{equation}
with the {\it polygamma function} $\psi^{(k)} (x) := \frac{d^k}{dx^k} \log \Gamma(x)$. For $n=2k$ one has to subtract from the right hand side the term $\frac{1}{2^j} \psi^{(j-1)}(\frac{n+1}{2})$. We remind  the representation of $\Gamma(x)^{-1}$ due to Weierstrass (see for example \cite[Section 1.3, (1.3.17)]{Lebedev:1965}):
$
\frac{1}{\Gamma(x)} = x e^{\gamma x} \prod_{k=1}^{\infty} (1 + \frac xk) e^{-\frac xk}$.
Differentiating $- \log \Gamma(x)$ leads to
$$
\psi(x) = - \gamma - \frac 1x + \sum_{k=1}^{\infty} \biggl( \frac 1k - \frac{1}{x+k} \biggr) = - \gamma + \sum_{n=0}^{\infty} \biggl( \frac{1}{n+1} - \frac{1}{x+n} \biggr).
$$
Therefore one obtains
\begin{equation} \label{f3}
 \psi^{(k)}(x) = (-1)^{k+1} \, k! \, \sum_{n=0}^{\infty} \frac{1}{(x+n)^{k+1}}.
\end{equation}
It follows that
\begin{eqnarray*} \label{f2}
\sum_{i=1}^k \psi^{(j-1)}(1/2 +i) &  = & (-1)^{j} \, (j-1)! \, 2^j  \sum_{i=1}^k \sum_{m=i}^{\infty} \frac{1}{(2m+1)^{j}} \\
& & \hspace{-3cm} = (-1)^{j} \, (j-1)! \, 2^{j-1} \biggl( 2 \sum_{i=1}^k \sum_{m=i}^{k} \frac{1}{(2m+1)^{j}}  + 2 \sum_{i=1}^k \sum_{m=k+1}^{\infty} \frac{1}{(2m+1)^{j}} \biggr)  =: T_1 + T_2.
\end{eqnarray*}
With
$
2 \sum_{i=1}^k \sum_{m=i}^k \frac{1}{(2m+1)^{j}}  = \sum_{m=1}^k \frac{1}{(2m+1)^{j-1}} - \sum_{m=1}^k \frac{1}{(2m+1)^{j}}
$
we obtain
$$
T_1 = (-1)^{j} \, (j-1)! \, 2^{j-1}  \sum_{m=0}^k \frac{1}{(2m+1)^{j-1}} - (-1)^{j} \, (j-1)! \, 2^{j-1} -(-1)^{j} \, (j-1)! \, 2^{j-1} \sum_{m=1}^k \frac{1}{(2m+1)^{j}}.
$$
Further we get
$$
T_2= (-1)^{j} \, (j-1)! \, 2^{j-1}  \, 2k \,  \sum_{m=k+1}^{\infty} \frac{1}{(2m+1)^{j}}.
$$
Hence using \eqref{f3} for $\psi^{(j-1)}$ we obtain
\begin{eqnarray} \label{jallgemein}
\sum_{i=1}^k \psi^{(j-1)}(1/2 +i)
& = & (-1)^{j} \, (j-1)! \, 2^{j-1} \sum_{m=0}^{k} \frac{1}{(2m+1)^{j-1}} -\frac{1}{2}\psi^{(j-1)}\bigl(\frac{1}{2}\bigr) \nonumber \\
&   & {}+ (-1)^{j} \, (j-1)! \, 2^{j-1} (2k+1) \sum_{m=k+1}^{\infty} \frac{1}{(2m+1)^{j}}.
\end{eqnarray}
In particular for $j=2$, we have
\begin{eqnarray}\label{jgleichzwei}
\sum_{i=1}^k \psi^{(1)}(1/2 +i) 
&=& 2 \bigl(\frac{1}{2}\log(k)+\frac{1}{2}(\gamma+2\log(2))\bigr)-\frac{1}{2} \psi^{(1)}(1/2) + \frac{1}{2} (2k+1) \psi^{(1)}\bigl(k+\frac{3}{2}\bigr) \nonumber \\
&=& \log(k)+\gamma+2\log(2)-\frac{1}{2} \psi^{(1)}(1/2) +1+O\bigl(\frac{1}{n}\bigr). 
\end{eqnarray}
With \eqref{f4} we obtain for $n = 2k+1$ that
$$
\Gamma_j(n,2) = (-1)^j (j-1)! \sum_{m=0}^k \frac{1}{(2m+1)^{j-1}} + (-1)^j (j-1)! \, (2k+1) \, \sum_{m=k+1}^{\infty} \frac{1}{(2m+1)^{j}}.
$$
The first term is $- 2^{1-j} (j-1) \psi^{(j-2)}(\frac 12) + O(1/k)$. The second term is
$2^{-j} (2k+1)  \psi^{(j-1)}(\frac 12 + k +1)$. 
For $n=2k$ we have to subtract $2^{-j} \psi^{(j-1)}(\frac 12 +k)$.
Finally we will apply some bounds for the polygamma functions $\psi^{(j)}$. Therefore we will apply the following integral-representation (see for example
\cite[Section 1.4, (1.4.12)]{Lebedev:1965}):
\begin{equation} \label{integral1}
\psi(x) = \log(x) - \int_0^{\infty} e^{-tx} \biggl( t f(t) + \frac 12 \biggr) \, dt \quad \text{with} \quad f(t) := \biggl( \frac 12 - \frac 1t + \frac{1}{e^t -1} \biggr) \, \frac 1t, \,\, t \geq 0.
\end{equation}
Differentiating we see that for $j \geq 1$:
\begin{equation} \label{integral2}
\psi^{(j)}(x) = (-1)^{j-1} j! x^{-j} + (-1)^{j-1}  \int_0^{\infty} e^{-tx} t^j \biggl( t f(t) + \frac 12 \biggr) \, dt.
\end{equation}
Notice that $0 < \bigl( t f(t) + \frac 12 \bigr) < 1$ for every $t \geq 0$; hence we obtain for every $x \geq 0$ and every $j \geq 1$:
\begin{equation} \label{polygammabound}
|\psi^{(j)}(x)| \leq j! x^{-j} + j! x^{-j-1}.
\end{equation}
Let us consider the variance $\sigma_2^2 = \Gamma_2(n,2)$. With \eqref{polygammabound} we have $|\psi^{(1)}(1/2 + k)| \leq (\frac 12 +k)^{-1} + (\frac 12 +k)^{-2}$. Hence we  have
$\sigma_2^2 =  \frac 12 \sum_{i=1}^k \psi^{(1)}(1/2 +i)  + \frac 12 \psi(1/2) + O(1/k)$ and with \eqref{jgleichzwei}
we obtain
$$
\sigma_2^2 = \frac 12  \log k + \frac 12 (\gamma + 2 \log 2 +1) + O(1/k).
 $$
For $j \geq 3$ the cumulants can be bounded by: With \eqref{polygammabound} we obtain
\begin{eqnarray*}
|\Gamma_j(n,2)| & \leq & \bigg| 2^{1-j} (j-1) \psi^{(j-2)}(1/2) \bigg| + \bigg| 2^{-j} (2k+1)  \psi^{(j-1)}(1/2 + k +1) \bigg| \\
& + & \bigg| 2^{-j} \psi^{(j-1)}(1/2 +k) \bigg|+ O(1/k) \\
& \leq & 6 (j-1)! + \operatorname{const} \, \biggl( \frac{(j-1)!}{2^{j-1}} \, \frac{1}{k^{j-2}} + \frac{(j-1)!}{2^{j-1}} \, \frac{1}{k^{j-1}} \biggr) 
\leq   \operatorname{const}  (j-1)!.
\end{eqnarray*}
Therefore the cumulants satisfy the stated bounds.
\end{proof}
With some more technical effort we obtain similar results for the Gaussian orthogonal ensembles:
 \begin{lemma}[Bounds for the cumulants of $\log |\det X_n^1|$, GOE] \label{cumulantsGOE}
For the orthogonal Gaussian ensemble ($\beta = 1$) we obtain
$$
\Gamma_1(n,1) = \frac n2 \log  \bigl( 2 \lfloor n/2 \rfloor \bigr) - \frac n2 + \operatorname{const} + O(1/n)
$$
and
$$
\sigma_1^2 := \Gamma_2(n,1)= \log \bigl( 2 \lfloor n/2 \rfloor \bigr) + \frac{\gamma}{2} +1 -2K + \frac{\pi^2}{4}   + O(1/n), 
$$
where $K$ denotes Catalan's constant $K = \sum_{m=0}^{\infty} \frac{(-1)^m}{(2m+1)^2}$,
and for any $j \geq 3$
$$
\big| \Gamma_j(n,1 ) \big| \leq  \operatorname{const} \, j! .
$$
\end{lemma}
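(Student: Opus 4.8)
The plan is to follow the proof of Lemma~\ref{cumulantsGUE} essentially line by line, now starting from the two explicit expressions \eqref{moment1} for odd $n=2p+1$ and \eqref{moment1b} for even $n=2p$, and treating the two parities separately.

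For odd $n$ the computation is a direct analogue of the GUE case. Taking logarithms in \eqref{moment1} and differentiating at $s=0$ gives $\Gamma_1(n,1)=\frac n2\log4+\frac12\sum_{m=1}^n\psi(\frac12+b_m^1)$ and $\Gamma_j(n,1)=2^{-j}\sum_{m=1}^n\psi^{(j-1)}(\frac12+b_m^1)$ for $j\ge2$. The one new feature is that, since $b_m^1=\frac12\lfloor\frac{m-1}2\rfloor+\frac14$, the arguments $\frac12+b_m^1$ run through the \emph{quarter}-integers $\frac{2l+1}4$, each appearing twice (with a single leftover term at the top), rather than through the half-integers of the GUE case. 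First I would reduce these sums exactly as in Lemma~\ref{cumulantsGUE}, using the series representation \eqref{f3} and the recursion $\psi^{(k)}(x+1)=\psi^{(k)}(x)+(-1)^k k!\,x^{-k-1}$. Splitting the sum over $l$ according to the parity of $l$ sends the arguments into the two progressions $t-\frac14$ and $t+\frac14$; telescoping each progression down to its base point produces the special values $\psi(\frac14),\psi(\frac34)$ for $\Gamma_1$ and $\psi^{(1)}(\frac14),\psi^{(1)}(\frac34)$ for $\sigma_1^2=\Gamma_2$. This is precisely where Catalan's constant enters, through $\psi^{(1)}(\frac34)-\psi^{(1)}(\frac14)=-16K$, while $\psi(\frac14)+\psi(\frac34)=-2\gamma-6\log2$ feeds the $\gamma$ and $\log2$ terms; the leading orders $\frac n2\log(2\lfloor n/2\rfloor)$ and $\log(2\lfloor n/2\rfloor)$ come from the harmonic asymptotics \eqref{harmonic}. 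For $j\ge3$ the bound $|\Gamma_j(n,1)|\le\const\,j!$ is obtained verbatim as in the GUE proof: after the same telescoping only the boundary values $\psi^{(j-2)}(\frac14),\psi^{(j-2)}(\frac34)$ and a convergent remainder survive, each controlled by the polygamma bound \eqref{polygammabound}.

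The even case $n=2p$ carries the additional difficulty and is the main obstacle. Writing out $\log M_{2p,1}(s)$ from \eqref{moment1b} splits it into the linear term $\frac{(n+1)s}2\log2$, the gamma factors $\log\Gamma(\frac{s+1}2)-\log\Gamma(\frac{n+1+s}2)$ (plus $s$-free constants), the product $\sum_{m=1}^p[\log\Gamma(s+m+\frac12)-\log\Gamma(m+\frac12)]$, and the hypergeometric term $\log F(\frac{s+1}2,-\frac s2;\frac{n+1+s}2;\frac12)$. The first three pieces are handled by the same polygamma machinery: the product contributes $\sum_{m=1}^p\psi^{(j-1)}(m+\frac12)$, and the gamma factors contribute $2^{-j}\psi^{(j-1)}(\frac12)$ together with $-2^{-j}\psi^{(j-1)}(\frac{n+1}2)$, the latter being $O(1/n)$ by \eqref{polygammabound}. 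The delicate point is the hypergeometric factor. Since $F(\frac12,0;\frac{n+1}2;\frac12)=1$, the entire $s$-dependence of $F$ sits in the Pochhammer symbol $(-\frac s2)^{(m)}$, which vanishes at $s=0$ for every $m\ge1$, and in the parameter $c=\frac{n+1+s}2\to\infty$. The plan is to show that $F(s)-1=O(1/n)$ uniformly for $s$ in a fixed complex disc about $0$: in the defining series \eqref{hypergeometric} the $m$-th term carries the factor $1/(\frac{n+1}2)^{(m)}=O(n^{-m})$, so the sum is dominated by $m=1$. Cauchy's estimates then give the same $O(1/n)$ bound for every $s$-derivative of $\log F$ at $s=0$, and hence for the hypergeometric contribution to each $\Gamma_j(n,1)$. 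Collecting the four pieces and using $\lfloor n/2\rfloor=p$ reproduces the asymptotics of the odd case.

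The hardest step is this uniform control of the hypergeometric factor: one must verify that the termwise decay $O(n^{-m})$ survives summation, i.e. that the growth of $(\frac{s+1}2)^{(m)}$ and of the $s$-derivatives of $(-\frac s2)^{(m)}$ is beaten by the decay of $1/(\frac{n+1}2)^{(m)}$, so that $\log F$ and all its derivatives are genuinely negligible. The subsequent bookkeeping of the additive constants, and the check that the even and odd expansions collapse into the single stated formula, is then routine but lengthy -- this is the ``more technical effort'' alluded to before the lemma.
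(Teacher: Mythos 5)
Your proposal is correct and follows the paper's overall strategy: reduce everything to polygamma sums via the explicit formulas \eqref{moment1} and \eqref{moment1b}, identify the constants through special values of $\psi$ and $\psi^{(1)}$ at the quarter-integers (which is indeed where Catalan's constant enters, via $\psi^{(1)}(\tfrac14)-\psi^{(1)}(\tfrac34)=16K$), bound the higher cumulants with \eqref{polygammabound}, and dispose of the hypergeometric factor in the even case by showing that $\log F$ and all its $s$-derivatives at $s=0$ are $O(1/n)$ because the $m$-th term of the series is $O(n^{-m})$. The one genuine divergence is in how you reduce the quarter-integer sums for odd $n$: you split the arguments $\tfrac{2l+1}{4}$ by parity and telescope each progression down to its base point $\tfrac14$ or $\tfrac34$ using $\psi^{(k)}(x+1)=\psi^{(k)}(x)+(-1)^k k!\,x^{-k-1}$, whereas the paper pairs consecutive terms $\psi^{(j-1)}(\tfrac34+\tfrac{i-1}{2})+\psi^{(j-1)}(\tfrac34+\tfrac{i}{2})$ via Legendre's duplication formula, converting the GOE sum into the already-computed GUE sum $\sum_i\psi^{(j-1)}(\tfrac12+i)$ plus a single boundary term $\psi^{(j-1)}(\tfrac34)$. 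The paper's route is slightly more economical because it recycles \eqref{spaeter}, \eqref{jgleichzwei} and \eqref{jallgemein} verbatim, while yours requires redoing the harmonic asymptotics for two separate progressions; both yield the same leading terms and constants. Your suggestion to control the derivatives of $\log F$ by Cauchy estimates on a small disc is a clean alternative to the paper's termwise differentiation and is equally valid, provided you check (as you note) that the uniform bound $F(s)-1=O(1/n)$ holds on a fixed complex neighbourhood of $s=0$.
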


\begin{proof}
For $\beta=1$ and $n=2k+1$, formula \eqref{moment1} for the Mellin transform implies
\begin{eqnarray*}
\Gamma_1(n,1)
&=& \frac{d}{ds} \log M_{n,s}(s)\big|_{s=0}
= \frac{n}{2}\log(4) + \frac{1}{2} \sum_{i=1}^n \psi\bigl(\frac{1}{2} + \frac{1}{2}\bigl\lfloor\frac{i-1}{2}\bigr\rfloor +\frac{1}{4}\bigr)
\\
&=& n\log(2) + \sum_{i=0}^{k-1} \psi\bigl(\frac{3}{4}+\frac{i}{2}\bigr) +\frac{1}{2} \psi\bigl(\frac{3}{4}+\frac{k}{2}\bigr)
\\
&=& n\log(2) + \frac{1}{2}\psi\bigl(\frac{3}{4}\bigr)
+ \sum_{i=1}^{k} \biggl(\frac{1}{2}\psi\bigl(\frac{3}{4}+\frac{i-1}{2}\bigr) +\frac{1}{2} \psi\bigl(\frac{3}{4}+\frac{i}{2}\bigr)\biggr).
\end{eqnarray*}
The last transformation is useful since we are now able to apply Legendre's duplication formula $\Gamma(z) \Gamma(z + 1/2) = 2^{1-2z} \sqrt{\pi} \Gamma(2z)$ (see for example \cite[Section 1.2]{Lebedev:1965}). This implies 
\begin{equation}\label{legendre}
\frac{1}{2} \psi(z) + \frac{1}{2} \psi\bigl(z+\frac{1}{2}\bigr)=\psi(2z) -\log(2).
\end{equation}
With $z = 3/4 + i/2 -1/2$ we obtain 
\begin{equation}\label{gamma_1(n,1)}
\Gamma_1(n,1)
= n\log(2) + \frac{1}{2}\psi\bigl(\frac{3}{4}\bigr)
+\sum_{i=1}^{k} \psi\bigl(1/2+i\bigr) - k\log(2).
\end{equation}
The summand $\frac{1}{2}\psi\bigl(\frac{3}{4}\bigr)$ equals via the same identity $\psi\bigl(\frac{1}{2}\bigr)-\log(2)-\frac{1}{2}\psi\bigl(\frac{1}{4}\bigr)= \psi\bigl(\frac{1}{2}\bigr)-\log(2)+\frac{\pi}{4}+\frac{3}{2}\log(2)+\frac{1}{2}\gamma = \frac{\pi}{4}-\frac{3}{2}\log(2)-\frac{1}{2}\gamma$.
As in the GUE case, we have $ \sum_{i=1}^{k} \psi\bigl(1/2+i\bigr)=  -\frac{1}{2}\psi\bigl(\frac{1}{2}\bigr)- k + \bigl(k+\frac{1}{2}\bigr)\log(k)+O\bigl(\frac{1}{k}\bigr)$, see \eqref{spaeter}. Now \eqref{gamma_1(n,1)} implies that
$$
\Gamma_1(n,1)= \frac{n}{2}\log(n-1) -\frac{n}{2} +\frac{\pi+2}{4} +O\bigl(\frac{1}{n}\bigr).
$$
The $j$th cumulant, $j\geq 2$, is given by
\begin{eqnarray*}
\Gamma_j(n,1)
&=& \frac{d^j}{ds^j} \log M_{n,s}(s)\big|_{s=0}
= \frac{1}{2^j} \sum_{i=1}^n \psi^{(j-1)}\bigl(\frac{1}{2} + \frac{1}{2}\bigl\lfloor\frac{i-1}{2}\bigr\rfloor +\frac{1}{4}\bigr)
\\
&=& \frac{1}{2^{j-1}} \sum_{i=0}^{k-1} \psi^{(j-1)}\bigl(\frac{3}{4}+\frac{i}{2}\bigr) +\frac{1}{2^j} \psi^{(j-1)}\bigl(\frac{3}{4}+\frac{k}{2}\bigr).
\end{eqnarray*}
Differentiating \eqref{legendre} implies
$
\psi^{(j-1)}(2z) = \frac{1}{2^j} \psi^{(j-1)}(z) + \frac{1}{2^j} \psi^{(j-1)}\bigl(z+\frac{1}{2}\bigr)
$
and therefore
\begin{equation}\label{gamma_j(n,1)}
\Gamma_j(n,1)
= \frac{1}{2^j} \psi^{(j-1)}\bigl(\frac{3}{4}\bigr) + \sum_{i=1}^{k} \psi^{(j-1)}\bigl(1/2+i \bigr)
\end{equation}
hold.
The duplicity formula for $z=\frac{1}{4}$ implies 
$
\frac{1}{4}\psi^{(1)}\bigl(\frac{3}{4}\bigr)
=\psi^{(1)}\bigl(\frac{1}{2}\bigr)-\frac{1}{4}\psi^{(1)}\bigl(\frac{1}{4}\bigr),
$
where
$\psi^{(1)}\bigl(\frac{1}{4}\bigr)
= 16 \sum_{m=0}^{\infty} \frac{1}{(4m+1)^2}
= 8 \sum_{m=0}^{\infty} \bigl(\frac{1}{(2m+1)^2}+\frac{(-1)^m}{(2m+1)^2} \bigr)
= 2 \sum_{m=0}^{\infty} \frac{1}{(m+\frac{1}{2})^2}+8\sum_{m=0}^{\infty}\frac{(-1)}{(2m+1)^2}
= 2 \psi^{(1)}\bigl(\frac{1}{2}\bigr) + 8K
$
with Catalan's constant $K$, resulting in
$\frac{1}{4}\psi^{(1)}\bigl(\frac{3}{4}\bigr)= \frac{\pi^2}{4}-2K$.
With \eqref{gamma_j(n,1)} and \eqref{jgleichzwei} we can conclude
\begin{equation}\label{gamma_2(n,1)}
\Gamma_2(n,1)=\frac{1}{4} \psi^{(1)}\bigl(\frac{3}{4}\bigr) + \sum_{i=1}^{k} \psi^{(1)}\bigl(1/2+i\bigr)= \frac{\pi^2}{4} -2K+
\log(k)+\frac{\gamma}{2}+\log(2) +1+O\bigl(\frac{1}{n}\bigr).
\end{equation}
For every $j\geq3$, the first summand can be bounded using \eqref{polygammabound}
$$
\bigl|\frac{1}{2^j}\psi^{(j-1)}\bigl(\frac{3}{4}\bigr)\bigr|
\leq (j-1)! \bigl(\frac{2}{3}\bigr)^{j-1} + (j-1)!  2 \bigl(\frac{2}{3}\bigr)^{j}
= (j-1)! \frac{7}{3} \bigl(\frac{2}{3}\bigr)^{j-1},
$$
and the remaining sum in \eqref{gamma_j(n,1)} is the same as in the GUE case:
With \eqref{jallgemein} we have
$$
\sum_{i=1}^k \psi^{(j-1)}\bigl(1/2 +i\bigr) + \frac{1}{2}\psi^{(j-1)}\bigl(\frac{1}{2}\bigr)
=  -2 (j-1) \psi^{(j-2)}\bigl(\frac{1}{2}\bigr)
	+ (2k+1) \psi^{(j-1)}\bigl(1/2+k+1\bigr)
	+O\bigl(\frac{1}{k}\bigr).
$$
Applying \eqref{polygammabound} we obtain
$ 
\biggl|\sum_{i=1}^k \psi^{(j-1)}(1/2 +i) + \frac{1}{2}\psi^{(j-1)}\bigl(\frac{1}{2}\bigr)\biggr|
\leq 
\operatorname{const}  (j-1)!,
$
which implies the bound for the $j$th cumulant, $j\geq3$. 

In the case of $n=2k$ even, we have to study the asymptotic behaviour of the hypergeometric function (see \eqref{hypergeometric}):
$
F\biggl(\frac{s+1}{2}, - \frac{s}{2}; \frac{n+1+s}{2}; \frac 12\biggr)
:=1+\sum_{m=1}^{\infty} x_m$,
denoting
${\displaystyle
\frac{\bigl(\frac{1+s}{2}\bigr)^{(m)} \bigl(-\frac{s}{2}\bigr)^{(m)}}
{\bigl(\frac{n+1+s}{2}\bigr)^{(m)}}
\frac{1}{2^m m!} \textrm{ by } x_m.
}$
Each $x_m$ is of order $O(n^{-m})$ and, for $s\in[0,2)$ and $n$ large enough, the hypergeometric function takes values in the interval $(-1,1)$. Therefore we can study the power series of the logarithm and get
$$
\log F\biggl(\frac{s+1}{2}, - \frac{s}{2}; \frac{n+1+s}{2}; \frac 12\biggr)
= \log\biggl(1+\sum_{m=1}^{\infty} x_m\biggr)
= \sum_{m=1}^{\infty} x_m 
+\sum_{l=2}^{\infty} (-1)^l \frac{1}{l} \bigl(\sum_{m=1}^{\infty} x_m\bigr)^l.
$$
We differentiate each $x_m$ via the quotient rule and the product rule in the enumerator. Setting $s=0$, the only remaining term in the enumerator is the one where we differentiate the factor $-\frac{s}{2}$. Thus the square of the denominator cancels out. The derivative of $x_m$ equals a constant times $\frac{1}{2^m m!} \frac{1}{\bigl(\frac{n+1}{2}\bigr)^{(m)}}$.
It follows that the sum over $l$ is of order $O(n^{-1})$, too.
Similarly we obtain that for every $j \geq 1$
$$\frac{d^j}{d s^j} \log F\biggl(\frac{s+1}{2}, - \frac{s}{2}; \frac{n+1+s}{2}; \frac 12\biggr) \bigg|_{s=0} = O \bigl( 1/n \bigr).
$$
Thus with \eqref{moment1b} and \eqref{spaeter} it follows that
\begin{eqnarray*}
\Gamma_1(n,1)&=&
\frac{n+1}{2} \log(2) + \frac{d}{ds}\log F\biggl(\frac{s+1}{2}, - \frac{s}{2}; \frac{n+1+s}{2}; \frac 12\biggr)\bigg|_{s=0} + \frac{1}{2} \psi\bigl(\frac 12\bigr) - \frac 12 \psi\bigl(\frac{n+1}{2}\bigr)
\\&&{}+\sum_{m=1}^k \psi\bigl(1/2+m \bigr)
\\
&=&
\frac{n+1}{2} \log(2) + O\bigl(\frac{1}{n}\bigr) + \frac{1}{2} \psi\bigl(\frac 12\bigr) - \frac 12 \psi\bigl(\frac{n+1}{2}\bigr)
-\frac{1}{2} \psi\bigl(\frac 12\bigr)-\frac{n}{2}+\frac{n+1}{2}\log\bigl(\frac{n}{2}\bigr)
\\
&=&
\frac{n}{2}\log(n) -\frac{n}{2} + \frac{1}{2}\log(2) +O\bigl(1/n \bigr)
\end{eqnarray*}
and by \eqref{jallgemein}
\begin{eqnarray*}
\Gamma_j(n,1)
&=& \frac{d^j}{ds^j}\log F\biggl(\frac{s+1}{2}, - \frac{s}{2}; \frac{n+1+s}{2}; \frac 12\biggr)\bigg|_{s=0} + \frac{1}{2^j} \psi^{(j-1)}\bigl(\frac 12\bigr) - \frac{1}{2^j} \psi^{(j-1)}\bigl(\frac{n+1}{2}\bigr)
\\
&&{}+\sum_{m=1}^k \psi^{(j-1)}\bigl(1/2+ m \bigr)
\\
&=& \frac{1}{2^j} \psi^{(j-1)}\bigl(\frac 12\bigr) - \frac{1}{2^j} \psi^{(j-1)}\bigl(\frac{n+1}{2}\bigr)+\sum_{m=1}^k \psi^{(j-1)}\bigl(1/2+m \bigr) +O\bigl(1/n\bigr).
\end{eqnarray*}
Note that the only difference to the case $n=2k+1$, see \eqref{gamma_j(n,1)}, is the summand $\frac{1}{2^j} \psi^{(j-1)}\bigl(\frac{n+1}{2}\bigr)$, which is of order $O\bigl(1/n\bigr)$. Therefore the second and higher cumulant satisfy the stated bounds for all $n$. \end{proof}

Good estimates on cumulants imply asymptotic results for the log-determinant of GUE and GOE ensembles, respectively. 
Before we state our results, we remind the reader on Cram\'er-type moderate deviations and a moderate deviation principle.
The classical result due to Cram\'er is the following. For independent and identically distributed random variables $X_1, \ldots, X_n$ with
$\E(X_1)=0$ and $\E(X_1^2) =1$ such that $\E e^{t_0 |X_1|} \leq c < \infty$ for some $t_0>0$, the following expansion for tail probabilities can be proved:
$$
\frac{ P(W_n > x)}{1 - \Phi(x)} = 1 + O(1) (1+x^3) / \sqrt{n}
$$
for $0 \leq x \leq n^{1/6}$ with $W_n := (X_1 + \cdots + X_n) / \sqrt{n}$, $\Phi$ the standard normal distribution function, and $O(1)$ depends on $c$ and $t_0$.
This result is sometimes called a {\it large deviations relation}. 
Let us recall the definition of a large deviation principle (LDP) due to Varadhan, see for example Dembo and Zeitouni
\cite{Dembo/Zeitouni:LargeDeviations}. A sequence of probability measures $\{(\mu_n), n\in \mathbb N\}$ on a
topological space $\mathcal X$ equipped with a $\sigma$-field $\mathcal
B$ is said to satisfy the LDP with speed $s_n\nearrow \infty$ and
good rate function $I(\cdot)$ if the level sets $\{x: I(x)\leq
\alpha\}$ are compact for all $\alpha\in[0,\infty)$ and for all
$\Gamma\in\mathcal B$ the lower bound
$$
\liminf_{n\to\infty}  \frac{1}{s_n} \log \mu_n(\Gamma)
\geq - \inf_{x\in \operatorname{int}(\Gamma)} I(x)
$$
and the upper bound
$$
\limsup_{n\to\infty}  \frac{1}{s_n} \log \mu_n(\Gamma)
\leq - \inf_{x\in \operatorname{cl}(\Gamma)} I(x)
$$
hold. Here $\operatorname{int}(\Gamma)$ and
$\operatorname{cl}(\Gamma)$ denote the interior and closure of
$\Gamma$ respectively. We say a sequence of random variables satisfies
the LDP when the sequence of measures induced by these variables
satisfies the LDP. Formally a moderate deviation principle is nothing
else but the LDP. However, we will speak about a moderate deviation 
principle (MDP) for a sequence of random variables, whenever the scaling
of the corresponding random variables is between that of an ordinary Law
of Large Numbers and that of a Central Limit Theorem. 

We consider
\begin{equation} \label{mainobject}
W_{n, \beta} := \frac{ \log | \det X_n^{\beta} | - \Gamma_1(n, \beta)}{\sigma_{\beta}} \quad \text{for} \quad \beta = 1,2
\end{equation}
as well as
\begin{equation} \label{mainobjectnumeric}
\widetilde{W}_{n, \beta} := \frac{ \log | \det X_n^{\beta} | - \frac n2 \log n + \frac n2 }{\sqrt{ \frac{1}{\beta} \log n}} \quad \text{for} \quad \beta = 1,2.
\end{equation}

\begin{theorem} \label{main}
For $\beta = 1,2$ we can prove:
(1) Cram\'er--type moderate deviations: \quad There exists two constants $C_1$ and $C_2$ depending on $\beta$, such that the following
inequalities hold true:
$$
\bigg| \log \frac{P(W_{n, \beta} \geq x)}{1 - \Phi(x)} \bigg| \leq C_2 \frac{1 + x^3}{\sigma_{\beta}}
$$
and
$$
\bigg| \log \frac{P(W_{n, \beta} \leq -x)}{\Phi(-x)} \bigg| \leq C_2 \frac{1 + x^3}{\sigma_{\beta}}
$$
for all $0 \leq x \leq C_1 \sigma_{\beta}$. On all cases $\sigma_{\beta}$ is of order $\sqrt{ \log n}$.

(2) Berry-Esseen bounds: \quad We obtain the following bounds:
$$
\sup_{x \in \R} \big| P(W_{n, \beta} \leq x) - \Phi(x) \big| \leq C(\beta) (\log n)^{-1/2},\quad \sup_{x \in \R} \big| P(\widetilde{W}_{n, \beta} \leq x) - \Phi(x) \big| \leq C(\beta) (\log n)^{-1/2}.
$$

(3) Moderate deviations principle: \quad For any sequence $(a_n)_n$ of real numbers such that
$1 \ll a_n \ll \sigma_{\beta}$
the sequences $\bigl( \frac{1}{a_n} W_{n, \beta} \bigr)_n$ and $\bigl( \frac{1}{a_n} \widetilde{W}_{n, \beta} \bigr)_n$ 
satisfy a MDP with speed $a_n^2$ and rate function $I(x) = \frac{x^2}{2}$, respectively.
\end{theorem}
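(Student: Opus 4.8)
The plan is to reduce all three assertions to the cumulant estimates of Lemmas~\ref{cumulantsGUE} and~\ref{cumulantsGOE} and then feed them into the method of cumulants. Fix $\beta\in\{1,2\}$, set $L_n:=\log|\det X_n^\beta|$, $\mu_n:=\Gamma_1(n,\beta)$ and $\sigma_\beta:=\sqrt{\Gamma_2(n,\beta)}$, so that $W_{n,\beta}=(L_n-\mu_n)/\sigma_\beta$ is centred with unit variance. Since cumulants are invariant under centring and satisfy $\Gamma_j(cX)=c^{\,j}\Gamma_j(X)$, we have $\Gamma_j(W_{n,\beta})=\Gamma_j(n,\beta)/\sigma_\beta^{\,j}$ for every $j\ge2$. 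The two lemmas give $|\Gamma_j(n,\beta)|\le c\,j!$ for all $j\ge3$ together with $\sigma_\beta^2=\Gamma_2(n,\beta)=\tfrac1\beta\log n+O(1)\to\infty$; hence for all $n$ with $\sigma_\beta^2\ge c$,
\[
|\Gamma_j(W_{n,\beta})|=\frac{|\Gamma_j(n,\beta)|}{\sigma_\beta^{\,j}}\le\frac{c\,j!}{\sigma_\beta^{2}}\cdot\frac{1}{\sigma_\beta^{\,j-2}}\le\frac{j!}{\sigma_\beta^{\,j-2}},\qquad j\ge3 .
\]
Thus $W_{n,\beta}$ satisfies the Statulevi\v{c}ius cumulant condition with exponent $\gamma=0$ and parameter $\Delta_n:=\sigma_\beta\asymp\sqrt{\log n}$, and this single estimate drives all three parts.

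With this condition in hand (the $\gamma=0$, i.e.\ analytic/Cram\'er, regime), two standard machines apply to $W_{n,\beta}$. First, the lemma of Rudzkis, Saulis and Statulevi\v{c}ius \cite{RSS:1978,SaulisStratulyavichus:1989} yields the two-sided Cram\'er-type relation
\[
\Big|\log\frac{P(W_{n,\beta}\ge x)}{1-\Phi(x)}\Big|+\Big|\log\frac{P(W_{n,\beta}\le -x)}{\Phi(-x)}\Big|\le C\,\frac{1+x^3}{\Delta_n}
\]
on the admissible range $0\le x\le C_1\Delta_n$, which for $\gamma=0$ reaches order $\Delta_n=\sigma_\beta\asymp\sqrt{\log n}$ — exactly part~(1) — as well as the Berry--Esseen bound $\sup_{x\in\R}|P(W_{n,\beta}\le x)-\Phi(x)|\le C/\Delta_n\le C'(\log n)^{-1/2}$, the first half of part~(2). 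Second, the cumulant moderate-deviation criterion of \cite{DoeringEichelsbacher:2010}, whose admissible window is precisely $1\ll a_n\ll\Delta_n=\sigma_\beta$, shows that $\tfrac1{a_n}W_{n,\beta}$ obeys an MDP at speed $a_n^2$ with good rate function $I(x)=x^2/2$.

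It remains to pass to the explicitly normalised $\widetilde W_{n,\beta}$, using the leading-order asymptotics $\mu_n=\tfrac n2\log n-\tfrac n2+O(1)$ and $\sigma_\beta^2=\tfrac1\beta\log n+O(1)$ from the lemmas. Writing $\widetilde\mu_n:=\tfrac n2\log n-\tfrac n2$ and $\widetilde\sigma_n:=\sqrt{\tfrac1\beta\log n}$, one gets the affine relation $\widetilde W_{n,\beta}=\varrho_n\,W_{n,\beta}+\eta_n$ with slope $\varrho_n:=\sigma_\beta/\widetilde\sigma_n=1+O(1/\log n)$ and shift $\eta_n:=(\mu_n-\widetilde\mu_n)/\widetilde\sigma_n=O((\log n)^{-1/2})$. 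For the Berry--Esseen statement I write $P(\widetilde W_{n,\beta}\le x)=P\bigl(W_{n,\beta}\le(x-\eta_n)/\varrho_n\bigr)$ and bound
\[
\bigl|P(\widetilde W_{n,\beta}\le x)-\Phi(x)\bigr|\le\sup_{y}\bigl|P(W_{n,\beta}\le y)-\Phi(y)\bigr|+\Bigl|\Phi\bigl((x-\eta_n)/\varrho_n\bigr)-\Phi(x)\Bigr| ,
\]
whose first term is $O((\log n)^{-1/2})$ by the previous paragraph, while $\sup_t\varphi(t)|t|<\infty$ makes the second bounded uniformly in $x$ by $C(|\varrho_n-1|+|\eta_n|)=O((\log n)^{-1/2})$; this finishes part~(2). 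For the MDP I show that $\tfrac1{a_n}\widetilde W_{n,\beta}$ and $\tfrac1{a_n}W_{n,\beta}$ are exponentially equivalent at speed $a_n^2$: their difference $\tfrac{\varrho_n-1}{a_n}W_{n,\beta}+\tfrac{\eta_n}{a_n}$ exceeds a fixed $\delta>0$ only if $|W_{n,\beta}|\gtrsim\delta\,a_n\sigma_\beta^2$ (as $|\varrho_n-1|\asymp\sigma_\beta^{-2}$), and the exponential-moment bound furnished by the $\gamma=0$ condition makes this probability of order $\exp(-c\,a_n\sigma_\beta^3)$, so that $a_n^{-2}\log(\cdot)\le -c\,\sigma_\beta^3/a_n\to-\infty$ because $a_n\ll\sigma_\beta$. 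The exponential-equivalence principle \cite[Thm.~4.2.13]{Dembo/Zeitouni:LargeDeviations} then transports the MDP to $\tfrac1{a_n}\widetilde W_{n,\beta}$, proving part~(3).

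The essential content is the first step: turning the closed-form bounds $|\Gamma_j(n,\beta)|\le c\,j!$ into the normalised condition with the sharp parameter $\Delta_n=\sigma_\beta\asymp\sqrt{\log n}$, after which parts~(1)--(3) are outputs of existing cumulant theorems. The only genuinely delicate points are the bookkeeping ones in the passage from $W_{n,\beta}$ to $\widetilde W_{n,\beta}$, namely checking that replacing the exact mean and variance by their closed-form leading terms perturbs distributions by only $O((\log n)^{-1/2})$ for Berry--Esseen and is exponentially negligible at speed $a_n^2$ for the MDP; this is exactly where the explicit $O(1)$ remainders for $\Gamma_1(n,\beta)$ and $\Gamma_2(n,\beta)$ supplied by the two lemmas are used.
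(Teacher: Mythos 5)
Your proposal is correct and follows essentially the same route as the paper: normalize the cumulant bounds of Lemmas \ref{cumulantsGUE} and \ref{cumulantsGOE} into the Statulevi\v{c}ius condition $|\Gamma_j(W_{n,\beta})|\le j!/\Delta^{j-2}$ with $\Delta\asymp\sigma_\beta\asymp\sqrt{\log n}$, feed this into the Rudzkis--Saulis--Statulevi\v{c}ius lemmas for parts (1) and (2) and into the cumulant MDP criterion of \cite{DoeringEichelsbacher:2010} for part (3), and then transfer to $\widetilde W_{n,\beta}$ by comparing the exact mean and variance with their leading terms (Berry--Esseen via the Lipschitz-type bound on $\Phi$, MDP via exponential equivalence as in \cite[Theorem 4.2.13]{Dembo/Zeitouni:LargeDeviations}). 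One small remark: the value $\Gamma_1(n,2)=\tfrac n2\log n-\tfrac n2+O(1)$ you use is the correct one (and is what the transfer to $\widetilde W_{n,2}$ requires), even though the displayed formula in Lemma \ref{cumulantsGUE} carries a spurious extra $-\tfrac n2\log 2$.
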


\begin{remark} \label{RemarkDC}
The Berry-Esseen bound implies the Central Limit Theorem stated in \eqref{CLTiidMnC}. The statement of the central limit theorem in \cite{DelannayLeCaer:2000}
 was given differently. In section III, they considered a variance of order $2 \sigma^2 = \frac{1}{\beta n}$, meaning that the spectrum of the GUE model is concentrated on a finite interval (the support of the semicircular law). Then the $D$ is the determinant of the rescaled (!) GUE model, given a
 $\frac n2 \log n + n \log 2$ summand in addition to the expectation $-n( \frac 12 + \log 2) + O( \frac 1n)$ they stated in \cite[(43)]{DelannayLeCaer:2000}. 
 This is actually the expectation in \eqref{CLTiidMnC}. Choosing the variance $\sigma^2 = \frac{1}{4 n}$ in the case $\beta=2$
implies that we have to rescale each matrix-entry $\zeta_{ij}$ by $\zeta_{ij}/ (2 \sqrt{n})$ and hence the determinant of the rescaled matrix is $2^n n^{n/2}$
times the determinant of the matrix $X_n^2$.
\end{remark}

\begin{proof}
With the bound on the cumulants \eqref{cumbound1} we obtain that
$\big| \Gamma_j (W_{n,2}) \big| \leq 7 \frac{j!}{\sigma_2^j}$.
With $\sigma_2^2 \geq \frac 12 (\gamma + 2 \log 2 +1)$ we get
$$
\big| \Gamma_j (W_{n,2}) \big|
\leq j! \frac{1}{\sigma_2^{j-2}} \frac{7\cdot 2}{(\gamma + 2 \log 2 +1)}
\leq j! \frac{1}{\sigma_2^{j-2}} 5
\leq j! \Bigl(\frac{5}{\sigma_2}\Bigr)^{j-2}
\leq \frac{j!}{\Delta^{j-2}}
$$
with $\Delta = \sigma_2/5$ for all $n\geq 2$.
With Lemma 2.3 in \cite{SS:1991} one obtains 
$$
\frac{P\bigl( W_{n,2} \geq x\bigr) }{1 - \Phi(x)} = \exp(L(x)) \biggl( 1 + q_1 \phi(x) \frac{x+1}{\Delta_1} \biggr)
$$ and
$$
\frac{P \bigl( W_{n,2} \leq -x \bigr) }{\Phi(-x)} = \exp(L(-x)) \biggl( 1 + q_2 \phi(x) \frac{x+1}{\sqrt{2}\Delta_1} \biggr)
$$
for $0 \leq x \leq \Delta_1$, where $\Delta_1 = \sqrt{2} \Delta / 36$, 
$$
\phi(x) =\frac{60 \bigl(1 + 10 \Delta_1^2 \exp\bigl( -(1 -x/\Delta_1) \sqrt{\Delta_1} \bigr) \bigr)}{1 - x / \Delta_1},
$$ 
$q_1, q_2$ are constants in the interval $[-1,1]$ and $L$ is a function defined in \cite[Lemma 2.3, eq. (2.8)]{SS:1991} satisfying
$
\big| L(x) \big| \leq \frac{|x|^3}{3 \Delta_1} \quad \text{for all} \, \, x \,\, \text{with} \,\, |x| \leq \Delta_1$.
The Cram\'er--type moderate deviations follow applying \cite[Lemma 6.2]{DoeringEichelsbacher:2011}. The Berry-Esseen bound follows from
\cite[Lemma 2.1]{SS:1991} which is
$$
\sup_{x \in \R} \big| P\bigl( W_{n,2} \leq x \bigr) - \Phi(x) \big| \leq \frac{18}{\Delta_1} = \operatorname{const} \frac{1}{(\log n)^{1/2}}.
$$
The same Berry-Esseen bound follows using the asymptotic behavior of the first two moments.
Finally the MDP follows from \cite[Theorem 1.1]{DoeringEichelsbacher:2010} 
which is a MDP for $\bigl( \frac{1}{a_n} W_{n,2} \bigr)_n$ for any
sequence $(a_n)_n$ of real numbers growing to infinity slow enough such that $a_n / \Delta \to 0$ as $n \to \infty$. Moreover
$\bigl( \frac{1}{a_n} W_{n,2} \bigr)_n$ and $\bigl( \frac{1}{a_n} \widetilde{W}_{n,2} \bigr)_n$ are exponentially equivalent in the sense of
\cite[Definition 4.2.10]{Dembo/Zeitouni:LargeDeviations}: with $\hat{W}_{n,2} := \frac{ \log | \det X_n^{2} | - \frac n2 \log n + \frac n2 }{\sigma_2}$
we have that $|W_{n,2} - \hat{W}_{n,2}| \to 0$ as $n \to \infty$, and it follows that $\bigl( \frac{1}{a_n} \hat{W}_{n,2} \bigr)_n$ and  
$\bigl( \frac{1}{a_n} W_{n,2} \bigr)_n$ are exponentially equivalent. By Taylor we have
$\big| \frac{1}{a_n} (\hat{W}_{n,2} - \widetilde{W}_{n,2}) \big| = o(1) \,  \hat{W}_{n,2}$ and hence the result follows with 
\cite[Theorem 4.2.13]{Dembo/Zeitouni:LargeDeviations}.
\end{proof}

Next we will consider the following class of random matrices. Consider two independent families of i.i.d. random variables $(Z_{i,j})_{1 \leq i <j}$ (complex-valued) and $(Y_i)_{1 \leq i}$ (real-valued), zero mean, such that
$\E Z_{1,2}^2=0, \E|Z_{1,2}|^2=1$ and $\E Y_1^2=1$. Consider the (Hermitian) $n \times n$ matrix $M_n$ with entries $M_n^*(j,i) = M_n(i,j) = Z_{i,j}$ 
for $i <j $ and  $M_n^*(i,i) = M_n(i,i)=Y_i$. Such a matrix is called {\it Hermitian Wigner matrix}. 
The GUE matrices are the special case with complex Gaussian random variables $N(0,1)_{\C}$ in the upper triangular and 
real Gaussian random variables $N(0,1)_{\R}$ on the diagonal. 

We say that a Wigner Hermitian matrix obeys Condition $(C_1)$ for some constant $C$ if one has
\begin{equation} \label{condC1}
\E |Z_{i,j}|^C \leq C_1 \quad \text{and} \quad \E |Y_{i}|^C \leq C_1 
\end{equation}
for some constant $C_1$ independent on $n$. Two Wigner Hermitian matrices $M_n=(\zeta_{i,j})_{1 \leq i ,j \leq n}$ and $M_n' =(\zeta_{i,j}')_{1 \leq i ,j \leq n}$ match to order $m$ off the diagonal and to order $k$ on the diagonal if one has
$$
\E ( (\operatorname{Re}(\zeta_{i,j}))^a (\operatorname{Im}(\zeta_{i,j}))^b ) = \E ( (\operatorname{Re}(\zeta_{i,j}'))^a (\operatorname{Im}(\zeta_{i,j}'))^b ) 
$$
for all $1 \leq i\leq j \leq n$ and natural numbers $a,b \geq 0$ with $a+b \leq m$ for $i<j$ and
$a+b \leq k$ for $i=j$.

Applying \cite[Theorem 5]{TaoVu:2011}, the Four Moment Theorem for the determinant, we are able to prove
an MDP for the log-determinant even for a class of Wigner Hermitian matrices. For any Wigner Hermitian matrix $M_n$
consider
$$
W_n := \frac{ \log | \det M_n | - \frac 12 \log n! + \frac 14 \log n}{\sqrt{\frac 12 \log n}}.
$$

\begin{theorem}[Universal moderate deviations principle] \label{TH2}
Let $M_n$ be a Wigner Hermitian matrix whose atom distributions are independent of $n$, have real and imaginary parts that are independent and match
GUE to fourth order and obey Condition $(C_1)$, \eqref{condC1}, for some sufficiently large $C$, then for any sequence $(a_n)_n$ of real numbers such that
$1 \ll a_n \ll \sqrt{\log n}$ the sequence $\bigl( \frac{1}{a_n}  W_n \bigr)_n$
satisfies a MDP with speed $a_n^2$ and rate function $I(x) = \frac{x^2}{2}$. If $M_n$ matches GOE instead of GUE, then one instead has that
$ \bigl( \frac{\sqrt{  \frac 12 \log n}}{a_n \sqrt{\log n}} W_n \bigr)_n$ satisfies the MDP with same speed and rate function.
\end{theorem}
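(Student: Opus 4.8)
The plan is to deduce the universal MDP from the Gaussian MDP of Theorem \ref{main} by transporting it along the Four Moment Theorem for log-determinants \cite[Theorem 5]{TaoVu:2011}. The first task is to match the normalisations. Stirling's formula $\frac12\log n! = \frac n2\log n - \frac n2 + \frac14\log n + \frac14\log(2\pi) + O(1/n)$ shows that the centring $\frac12\log n! - \frac14\log n$ used in $W_n$ differs from the centring $\frac n2\log n - \frac n2$ of $\widetilde W_{n,2}$ only by a bounded constant $\kappa_n = \frac14\log(2\pi)+O(1/n)$. Hence, when $M_n = X_n^2$, the variable $W_n$ equals $\widetilde W_{n,2}$ up to the deterministic shift $\kappa_n/\sqrt{\tfrac12\log n}\to 0$, so $(\frac1{a_n}W_n)_n$ and $(\frac1{a_n}\widetilde W_{n,2})_n$ are exponentially equivalent at speed $a_n^2$, and Theorem \ref{main}(3) already gives the MDP for the latter whenever $a_n\ll\sqrt{\log n}\,(\ll\sigma_2)$. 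For a matrix matching GOE, the variance is $\sigma_1^2\sim\log n = 2\sigma_2^2$ (Lemma \ref{cumulantsGOE}), so $W_n$ is normalised by the wrong scale: dividing by $\sqrt2$ produces $\frac1{\sqrt2}W_n = (\log|\det M_n| - \frac12\log n! + \frac14\log n)/\sqrt{\log n}$, which for $M_n = X_n^1$ coincides with $\widetilde W_{n,1}$ up to a vanishing deterministic shift; this is exactly why the claimed GOE normalisation carries the factor $\sqrt{\tfrac12\log n}/\sqrt{\log n}=1/\sqrt2$. Thus the Gaussian endpoints of both assertions are covered by Theorem \ref{main}.

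Next I would reduce the MDP to an estimate on tail probabilities. Since the candidate rate function $I(x)=x^2/2$ is a good, convex, even rate function increasing on $[0,\infty)$, the upper bound for closed sets and the lower bound for open sets follow in the standard one-dimensional way once one knows, for every fixed $x>0$,
$$
\lim_{n\to\infty}\frac{1}{a_n^2}\log P\!\left(\tfrac1{a_n}W_n\ge x\right) = -\frac{x^2}{2}, \qquad \lim_{n\to\infty}\frac{1}{a_n^2}\log P\!\left(\tfrac1{a_n}W_n\le -x\right) = -\frac{x^2}{2},
$$
and similarly with the GOE normalisation. So it suffices to establish these two limits for the general Wigner matrix $M_n$.

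The heart of the argument is the comparison of these tails with the Gaussian ones via \cite[Theorem 5]{TaoVu:2011}. Writing the event $\{\frac1{a_n}W_n\ge x\}$ as $\{\log|\det M_n|\ge t_n\}$ with $t_n = \frac12\log n!-\frac14\log n + a_n x\sqrt{\tfrac12\log n}$, I would sandwich the indicator $\1_{[t_n,\infty)}$ between two smooth test functions $G_-\le\1_{[t_n,\infty)}\le G_+$ that interpolate over a window of order one; their derivatives are then $O(1)$, so the Four Moment Theorem yields $\E G_\pm(\log|\det M_n|) = \E G_\pm(\log|\det X_n^2|) + O(n^{-c})$ for some fixed $c>0$. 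Because the smoothing window has order-one length while the fluctuation scale is $\sqrt{\log n}$, the induced threshold shift in the normalised variable is $O\!\big(1/(a_n\sqrt{\log n})\big)\to 0$, and the Gaussian MDP of Theorem \ref{main} shows that the slightly shifted Gaussian tails still have exponential rate $-x^2/2$ at speed $a_n^2$. It then remains to absorb the additive error $O(n^{-c})$: here the hypothesis $a_n\ll\sqrt{\log n}$ is decisive, since it gives $a_n^2=o(\log n)$ and hence $\exp(-a_n^2(x^2/2+o(1)))\gg n^{-c}$, so that $n^{-c}$ is exponentially negligible at speed $a_n^2$ and $\frac1{a_n^2}\log\!\big(\E G_\pm(\log|\det X_n^2|)+O(n^{-c})\big)\to -x^2/2$. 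Combining the two sandwich bounds gives the required tail limits for $M_n$, and the GOE case is identical after replacing $X_n^2$ by $X_n^1$ and the variable $W_n$ by its $1/\sqrt2$-rescaling.

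The main obstacle is precisely this transfer step. The Four Moment Theorem controls only expectations of smooth functionals, with a polynomial error, so one must (i) pass from indicators to smooth test functions without destroying the exponential rate, which forces the smoothing window to be small relative to $\sqrt{\log n}$ yet coarse enough to keep the derivative bounds subpolynomial, and (ii) verify that the polynomial error $n^{-c}$ is beaten by the Gaussian moderate-deviation probability $\exp(-\Theta(a_n^2))$. Both (i) and (ii) hinge on the constraint $a_n\ll\sqrt{\log n}$; if $a_n$ were permitted to grow like $\sqrt{\log n}$, the error could match the probability and universality of the moderate deviations would break down.
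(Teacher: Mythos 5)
Your proposal is correct and follows essentially the same route as the paper: both transfer the Gaussian MDP of Theorem \ref{main} to $M_n$ via the Four Moment Theorem \cite[Theorem 5]{TaoVu:2011}, sandwiching indicators of the relevant events between smooth test functions whose derivative bounds meet the $O(n^{c_0})$ hypothesis, and both exploit $a_n^2 = o(\log n)$ to make the $n^{-c_0}$ comparison error exponentially negligible at speed $a_n^2$. The only cosmetic differences are that the paper works with compact intervals $[b,c]$ and a polynomially small smoothing window $n^{-c_0/10}$, concluding via \cite[Theorem 4.1.11, Lemmas 1.2.15 and 1.2.18]{Dembo/Zeitouni:LargeDeviations}, whereas you work with half-line tails and an order-one window, and you make explicit the Stirling matching of the centrings that the paper leaves implicit.
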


\begin{proof}
Let $M_n$ be the Wigner Hermitian matrix whose entries satisfy the conditions of the theorem and $M_n'$ denotes the GUE matrix.  Then \cite[Theorem 5]{TaoVu:2011} says that there
exists a small $c_0 >0$ such that for all $G : \R \to \R_+$ with $\big| \frac{d^j}{dx^j} G(x) \big| = O(n^{c_0})$ for $j=0, \ldots, 5$, we have
$$
\big| \E \bigl(G( \log | \det(M_n) |)\bigr) -  \E \bigl(G( \log | \det(M_n') |)\bigr) \big| \leq n^{-c_0}
$$
We consider for any $b, c \in \R$ the interval $I_n := [b_n, c_n]$ with 
$$
b_n := b \,a_n \sqrt{ \frac 12 \log n} + \frac 12 \log n! - \frac 14 \log n \,\, \text{and} \,\, c_n := c \, a_n \sqrt{ \frac 12 \log n} + \frac 12 \log n! - \frac 14 \log n
$$
With $I_n^+ := [b_n - n^{-c_0/10},  c_n + n^{-c_0/10} ]$ and $I_n^- := [b_n + n^{-c_0/10},  c_n - n^{-c_0/10} ]$ we construct a bump function $G_n: \R \to \R_+$
which is equal to one on the smaller interval $I_n^-$ and vanishes outside the larger interval $I_n^+$. It follows that $P( \log | \det(M_n) | \in I_n) \leq \E G_n(\log | \det(M_n)|)$ and  $\E G_n(\log | \det(M_n')|) \leq P( \log | \det(M_n') | \in I_n^+)$. One can choose $G_n$ to satisfy the condition 
$\big| \frac{d^j}{dx^j} G_n(x) \big| = O(n^{c_0})$ for $j=0, \ldots, 5$ and hence
\begin{equation} \label{intern1}
P( \log | \det(M_n) | \in I_n) \leq P( \log | \det(M_n') | \in I_n^+) + n^{-c_0}.
\end{equation}
By the same argument we get
\begin{equation} \label{intern2}
P( \log | \det(M_n') | \in I_n^-) - n^{-c_0} \leq P( \log | \det(M_n) | \in I_n).
\end{equation}
With $P \bigl( \frac{1}{a_n} W_n \in [b,c] \bigr) = P \bigl( \log | \det(M_n) |\in I_n \bigr)$. With \eqref{intern1} and \cite[Lemma 1.2.15]{Dembo/Zeitouni:LargeDeviations} we see that
$$
\limsup_{n \to \infty} \frac{1}{a_n^2} \log P \bigl( W_n/a_n \in [b,c] \bigr) \leq \max \biggl( \limsup_{n \to \infty} \frac{1}{a_n^2} \log 
P( \log | \det(M_n') | \in I_n^+); \limsup_{n \to \infty} \frac{1}{a_n^2} \log n^{-c_0} \biggr).
$$
For the first object we have
$$
\limsup_{n \to \infty} \frac{1}{a_n^2} \log 
P( \log | \det(M_n') | \in I_n^+) = \limsup_{n \to \infty} \frac{1}{a_n^2} \log P \biggl( \frac{1}{a_n} \widetilde{W}_{n,2} \in [b - \eta(n), c + \eta(n)] \biggr)
$$
with $\eta(n) := n^{-c_0/10} \bigl( a_n \sqrt{ \frac 12 \log n} \bigr)^{-1} \to 0$ as $n \to \infty$. Since $c_0 >0$ and $\log n /a_n^2 \to \infty$ for $n \to \infty$
by assumption, applying Theorem \ref{main} we have
$$
\limsup_{n \to \infty} \frac{1}{a_n^2} \log P \bigl( W_n/a_n \in [b,c] \bigr) \leq - \inf_{x \in [b,c]} \frac{x^2}{2}.
$$
Applying \eqref{intern2} we obtain in the same manner that
$$
\limsup_{n \to \infty} \frac{1}{a_n^2} \log P \bigl( W_n/a_n \in [b,c] \bigr) \geq - \inf_{x \in [b,c]} \frac{x^2}{2}.
$$
The conclusion follows applying \cite[Theorem 4.1.11 and Lemma 1.2.18]{Dembo/Zeitouni:LargeDeviations}. 
\end{proof}

\begin{remark}
The bump function $G_n$ in the proof of Theorem \ref{TH2} can be chosen to fulfill $\big| \frac{d^j}{dx^j} G_n(x) \big| = O(n^{c_0})$ for $j=0, \ldots, 5$
uniformly in the endpoints of the interval $[b,c]$. Hence the Berry-Esseen bound in Theorem \ref{main} can be obtained for Wigner matrices considered
in Theorem \ref{TH2}: 
$$
\sup_{x \in \R} \big| P(W_n \leq x) - \Phi(x) \big| \leq \operatorname{const} \, \bigl( (\log n)^{-1/2} + n^{-c_0} \bigr).
$$
We omit the details.
\end{remark}

\section{Non-symmetric and non-Hermitian Gaussian random matrices}

As already mentioned, recently Nguyen and Vu proved in \cite{NguyenVu:2011}, that for $A_n$ be an $n \times n$ matrix whose entries are independent real random variables with mean zero and variance one, the Berry-Esseen bound
$$
\sup_{x \in \R} \big| P( W_n \leq x ) - \Phi(x) \big| \leq \log^{-1/3 + o(1)} n
$$ with
\begin{equation} \label{Wn}
W_n := \frac{ \log (| \det A_n|) - \frac 12 \log(n-1)!}{\sqrt{ \frac 12 \log n}}
\end{equation}
holds true. We will prove good bounds for the cumulants of $W_n$ in the case where the entries are Gaussian random variables. Therefore we will be able to prove Cram\'er--type moderate deviations and an MDP as well as a Berry-Esseen bound of order $(\log n)^{-1/2}$ (and it seems
that one cannot have a rate of convergence better than this).
In the Gaussian case, again the calculation of the Mellin transform is the main tool. Fortunately, the transform can be calculated much easier.

Let $A_n$ be an $n \times n$ matrix whose entries are independent real or complex Gaussian random variables with mean zero and variance one. 
Denote by $A_n^{\dag}$ the transpose or Hermitian conjugate of $A_n$ according as $A_n$ is real or complex. Then $A_n A_n^{\dag}$ is positive semi-definite and its eigenvalues are real and non-negative. The positive square roots of the eigenvalues of $A_n A_n^{\dag}$ are known as the singular values of $A_n$. One has that
$$
\prod_{i=1}^n \lambda_i^2 = \det (A_n A_n^{\dag}) = | \det A_n|^2 = \prod_{i=1}^n |x_i|^2,
$$
where $\lambda_i$ are the singular values and $x_i$ are the eigenvalues of $A_n$. Now $A_n A_n^{\dag}$ is called {\it Wishart matrix}. For the real
case we consider independent $N(0,1)_{\R}$ distributed entries, for the complex case we assume that the real and imaginary parts are independent
and $N(0,1)_{\R}$ distributed entries. These ensembles are called {\it Ginibre ensembles}. One obtains for the joint probability distribution of the eigenvalues of $A_n A_n^{\dag}$ on $\R^n_+$ the density
$$
\frac{1}{\tilde{Z}_{n, \beta}} \exp \bigl( - \frac{\beta}{2} \sum_{i=1}^n y_i \bigr) \prod_{i=1}^n y_i^{\beta/2 -1} \prod_{i<j} |y_i -y_j|^{\beta}
$$
with $\beta=1$ for the real and $\beta=2$ for the complex case and $\tilde{Z}_{n, \beta}$ being the normalizing constant
(see for example \cite[Chapter 7]{Anderson:1971}). As a result the Gaussian joint probability density for the singular values $\lambda_i$ gets transformed to
$$
Q_{n,\beta}(\lambda_1, \ldots, \lambda_n) := \frac{1}{Z_{n,\beta}(n)} \exp \bigl( - \frac{\beta}{2} \sum_{i=1}^n \lambda_i^2 \bigr) \prod_{i=1}^n \lambda_i^{\beta -1} 
\prod_{i<j} |\lambda_i^2 - \lambda_j^2|^{\beta}
$$
with
\begin{equation} \label{norm}
Z_{n, \beta}(p) := \int \cdots \int \exp \bigl( - \frac{\beta}{2} \sum_{i=1}^n \lambda_i^2 \bigr) \prod_{i=1}^n \lambda_i^{(p-n) + \beta -1} \prod_{i<j} |\lambda_i^2 -\lambda_j^2|^{\beta} \prod_{i=1}^n d \lambda_i
\end{equation}
Now the Mellin transform of the probability density of the determinant of $A_n$ is given by
$$
{\mathcal M}_{n,\beta}(s) = \int_0^{\infty} \cdots  \int_0^{\infty} |\lambda_1 \cdots \lambda_n|^{s-1} Q_{n,\beta}(\lambda_1, \ldots, \lambda_n) \prod_{i=1}^n d \lambda_i = \frac{Z_{n, \beta}(n+s-1)}{Z_{n, \beta}(n)}.
$$
But using the Selberg identity of the Laguerre form, \cite[formula 17.6.5]{Mehta:RandomMatrices}, we obtain for the moment generating function 
$M_{n,\beta}(s) = {\mathcal M}_{n, \beta}(s-1)$:
\begin{equation} \label{trick}
M_{n,\beta}(s)= \bigl( \frac{2}{\beta} \bigr)^{ns/2} \prod_{i=1}^n \frac{\Gamma\bigl( (s + i \, \beta)/2 \bigr)}{\Gamma\bigl( (i \, \beta)/2 \bigr)}.
\end{equation}
This formula makes even sense for $\beta =4$, where $A_n$ is a quaternion matrix and $A_n^{\dag}$ denotes the dual of $A_n$ (see \cite[Section 15.4]{Mehta:RandomMatrices} for a discussion of the definition of a determinant in this case). We will concentrate on the real case $\beta=1$. The results
of the following theorem can be stated and proved similarly in the two other cases $\beta=2,4$. We omit the details. We consider $W_n$ as in \eqref{Wn}
and
\begin{equation} \label{variante}
\widetilde{W}_{n} := \frac{ \log | \det A_n | - \E(\log | \det A_n |)}{\V(\log | \det A_n |)^{1/2}}.
\end{equation}

\begin{theorem} \label{main2}
Let $A_n$ be an $n \times n$ matrix whose entries are independent real $N(0,1)_{\R}$ random variables. Then we have:
(1) Cram\'er--type moderate deviations: \quad There exists two constants $C_1$ and $C_2$ depending on $\beta$, such that the following
inequalities hold true:
$$
\bigg| \log \frac{P(\widetilde{W}_{n} \geq x)}{1 - \Phi(x)} \bigg| \leq C_2 \frac{1 + x^3}{\sigma_{\beta}}
$$
and
$$
\bigg| \log \frac{P(\widetilde{W}_{n} \leq -x)}{\Phi(-x)} \bigg| \leq C_2 \frac{1 + x^3}{\sigma_{\beta}}
$$
for all $0 \leq x \leq C_1 \V(\log | \det A_n |)^{1/2}$. 

(2) Berry-Esseen bounds: \quad We obtain the following bounds:
$$
\sup_{x \in \R} \big| P(W_{n} \leq x) - \Phi(x) \big| \leq C(\beta) (\log n)^{-1/2},\quad \sup_{x \in \R} \big| P(\widetilde{W}_{n} \leq x) - \Phi(x) \big| \leq C(\beta) (\log n)^{-1/2}.
$$

(3) Moderate deviations principle: \quad For any sequence $(a_n)_n$ of real numbers such that
$1 \ll a_n \ll \sigma_{\beta}$
the sequences $\bigl( \frac{1}{a_n} W_{n} \bigr)_n$ and $\bigl( \frac{1}{a_n} \widetilde{W}_{n} \bigr)_n$ 
satisfies a MDP with speed $a_n^2$ and rate function $I(x) = \frac{x^2}{2}$, respectively.
\end{theorem}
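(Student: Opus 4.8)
The plan is to reproduce the architecture of the proof of Theorem \ref{main}: extract the cumulants of $\log|\det A_n|$ in closed form from the moment formula \eqref{trick}, pin down the leading asymptotics of $\Gamma_1$ and $\sigma_1^2=\Gamma_2$, establish factorial bounds $|\Gamma_j|\leq\operatorname{const}\,j!$ for $j\geq3$, and then invoke the cumulant-based results of \cite{SS:1991} and \cite{DoeringEichelsbacher:2010} verbatim. Since for the Ginibre ensemble \eqref{trick} is a clean product of Gamma functions --- with no hypergeometric correction of the kind that appears for even $n$ in the GOE case --- this is in fact the most transparent of the three ensembles treated in the paper.

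First I would differentiate $\log M_{n,1}(s)=\frac{ns}{2}\log2+\sum_{i=1}^n\bigl(\log\Gamma\bigl(\frac{s+i}{2}\bigr)-\log\Gamma\bigl(\frac i2\bigr)\bigr)$ at $s=0$. This gives $\Gamma_1(n,1)=\frac n2\log2+\frac12\sum_{i=1}^n\psi\bigl(\frac i2\bigr)$ and, for $j\geq2$, $\Gamma_j(n,1)=2^{-j}\sum_{i=1}^n\psi^{(j-1)}\bigl(\frac i2\bigr)$. Using $\psi(x)=\log x-\frac1{2x}+O(x^{-2})$ together with the harmonic-sum expansion \eqref{harmonic}, the first cumulant evaluates to $\Gamma_1(n,1)=\frac12\log n!-\frac12\log n+O(1)=\frac12\log(n-1)!+O(1)$, which is exactly the centering used in \eqref{Wn}. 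For the variance I would split the arguments $i/2$ into the integer values ($i$ even) and the half-integer values ($i$ odd) and use $\psi^{(1)}(x)=x^{-1}+O(x^{-2})$; this yields $\sum_{i=1}^n\psi^{(1)}(i/2)=2\log n+O(1)$ and hence $\sigma_1^2=\Gamma_2(n,1)=\frac12\log n+O(1)$, matching the normalization $\sqrt{\frac12\log n}$ in \eqref{Wn}.

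For $j\geq3$ the factorial bound is immediate from the universal polygamma estimate \eqref{polygammabound}: since $|\psi^{(j-1)}(i/2)|\leq(j-1)!\,(i/2)^{-(j-1)}+(j-1)!\,(i/2)^{-j}$, summing over $i$ gives $|\Gamma_j(n,1)|\leq\frac{(j-1)!}{2}\sum_{i\geq1}i^{-(j-1)}+(j-1)!\sum_{i\geq1}i^{-j}\leq\operatorname{const}\,(j-1)!\leq\operatorname{const}\,j!$, the dominant contribution coming from the smallest argument $i=1$. Thus the normalized cumulants of $\widetilde W_n$ as in \eqref{variante} satisfy $|\Gamma_j(\widetilde W_n)|\leq j!/\Delta^{j-2}$ with $\Delta$ proportional to $\sigma_1\sim\sqrt{\tfrac12\log n}$.

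With these bounds in hand the three conclusions follow exactly as in the proof of Theorem \ref{main}: the Cram\'er--type moderate deviations and the Berry-Esseen bound for $\widetilde W_n$ come from \cite[Lemma 2.3 and Lemma 2.1]{SS:1991} together with \cite[Lemma 6.2]{DoeringEichelsbacher:2011}, and the MDP from \cite[Theorem 1.1]{DoeringEichelsbacher:2010}. The statements for $W_n$ in place of $\widetilde W_n$ are obtained by using the computed asymptotics $\Gamma_1(n,1)=\frac12\log(n-1)!+O(1)$ and $\sigma_1^2=\frac12\log n+O(1)$ to show that $\frac1{a_n}W_n$ and $\frac1{a_n}\widetilde W_n$ are exponentially equivalent in the sense of \cite[Definition 4.2.10]{Dembo/Zeitouni:LargeDeviations}, so that \cite[Theorem 4.2.13]{Dembo/Zeitouni:LargeDeviations} transfers the MDP. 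I do not expect a genuine obstacle here; the only delicate bookkeeping is the even/odd splitting of $\sum_{i=1}^n\psi^{(j-1)}(i/2)$ needed to nail the constant in the variance, and everything else is a direct transcription of the GUE argument.
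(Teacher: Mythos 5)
Your proposal is correct and follows essentially the same route as the paper: extract the cumulants of $\log|\det A_n|$ from \eqref{trick}, identify $\Gamma_1\sim\frac12\log(n-1)!$ and $\Gamma_2\sim\frac12\log n$, prove $|\Gamma_j|\leq\operatorname{const}\,j!$, and then import the machinery of the proof of Theorem \ref{main} unchanged. The only (harmless) deviation is that for $j\geq3$ you bound $2^{-j}\sum_{i=1}^n|\psi^{(j-1)}(i/2)|$ directly by summing \eqref{polygammabound} over all $i$, which is slightly more streamlined than the paper's detour through the identity \eqref{jallgemein}; also note the paper's displayed $\Gamma_1(n)=\frac n2\log n+\cdots$ is a typo for $\frac n2\log 2+\cdots$, and your version is the correct one.
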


\begin{proof}
With \eqref{trick} we are able to estimate the cumulants $\Gamma_j(n)$ of $\log | \det A_n |$. The calculations will benefit from 
a few results presented in the proofs of Lemma \ref{cumulantsGUE} and Lemma \ref{cumulantsGOE}. Therefore we restrict ourselves to the major steps of
the proof. We denote by $\psi$ the digamma function and by $\psi^{(k)}$, $k \in \N$, the polygamma function (see Lemma \ref{cumulantsGUE}).
With \eqref{trick} we have
$$
\Gamma_1(n) = \frac n2 \log n + \frac 12 \sum_{i=1}^n \psi(i/2) \quad \text{and} \quad \Gamma_j(n) = \frac{1}{2^j}  \sum_{i=1}^n \psi^{(j-1)}(i/2) \,\,  \text{for} \,\,
 j \geq 2.
$$
For $n=2k+1$ we have $\frac 12 \sum_{i=1}^n \psi(i/2) = \frac 12 \bigl(  \sum_{i=0}^k \psi(1/2+i) + \sum_{i=1}^k \psi(i) \bigr)$. Using \eqref{spaeter} the first summand is equal to $-\frac k2 + \frac k2 \log k + \frac 14 \log k + \frac 14 \psi(1/2) + O(1/k)$. With $\psi(1+x) = \psi(x) + \frac 1x$ (see Lemma \ref{cumulantsGUE}) one obtains that $\psi(i) = \psi(1) + \sum_{j=1}^{i-1} \frac 1j$. Thus applying \eqref{harmonic} we have $\frac 12 \sum_{i=1}^k \psi(i) =\frac k2 \log(k-1) - \frac k2 + \operatorname{const} + O(1/k)$. Summarizing we get
\begin{eqnarray*}
\Gamma_1(2k+1) & = & -k + k \log k + \frac 14 \log k + \operatorname{const} + O(1/k) \\
& = & - \frac n2 (1 + \log 2)  + \frac n2 \log (n-1) - \frac 14 \log (n-1)  + \operatorname{const} + O(1/n).
\end{eqnarray*}
Therefore the leading term of the expectation of $\log | \det A_n |$ is $\log\bigl( (n-1)! \bigr)$. In the case $n=2k$ one obtains the same order.
For $\Gamma_j(2k+1)$ with $j \geq 2$ we proceed as following:
$$
\Gamma_j(2k+1) = \frac{1}{2^j}  \sum_{i=1}^{2k+1} \psi^{(j-1)}(i/2) = \frac{1}{2^j} \biggl( \psi^{(j-1)}(1/2) + \sum_{i=1}^k \psi^{(j-1)}(1/2 + i) + \sum_{i=1}^k \psi^{(j-1)}(i) \biggr).
$$
Take the representation \eqref{f3} to see that $\psi^{(j-1)}(i) = (-1)^j(j-1)! \sum_{m=i}^{\infty} \frac{1}{m^j}$, such that
\begin{eqnarray*}
\sum_{i=1}^k \psi^{(j-1)}(i) & = & (-1)^j (j-1)! \biggl( \sum_{m=1}^k \frac{1}{m^{j-1}}  + k \sum_{m=k+1}^{\infty} \frac{1}{m^j} \biggr) \\
& = & -(j-1) \psi^{(j-2)}(1) + O(1/k) + k \psi^{(j-1)}(k+1).
\end{eqnarray*} 
With the help of \eqref{jallgemein} we obtain for $j \geq 3$ that 
\begin{eqnarray*}
\Gamma_j(n) = \frac{1}{2^{j+1}} \psi^{(j-1)}(1/2) &-& \frac{1}{2^j} (j-1) \bigl( \psi^{(j-2)}(1/2) + \psi^{(j-2)}(1) \bigr) \\ 
&+&  \frac{1}{2^{j+1}} (2k+1) \psi^{(j-1)}(1/2+k+1) + \frac{1}{2^{j}} \, k \, \psi^{(j-1)}(k+1) +O(1/k).
\end{eqnarray*} 
With \eqref{polygammabound} we are able to bound the cumulants in a similar way as in the proof of Lemma \ref{cumulantsGUE} and obtain
$|\Gamma_j(n)| \leq \operatorname{const} j!$. Moreover with \eqref{jgleichzwei} we obtain for the variance
$$
\Gamma_2(n) = \frac 12 \log n + \frac 12 \bigl( \gamma + 1 + \frac{\pi^2}{8} \bigr) + O(1/n).
$$
Therefore the leading term of the variance of $\log | \det A_n |$ is $\frac 12 \log n$. Now the theorem follows exactly as in the proof of Theorem \ref{main}. 
\end{proof}

\begin{remark}
Let $A_n$ be an $n \times n$ matrix whose entries are independent complex and quaternion, respectively. Then $W_n$ and $\widetilde{W}_{n}$ as defined before satisfy Cram\'er--type moderate deviations, Berry-Esseen bounds and a moderate deviations principle. This can easily be checked noting that, for $\beta=1,2,4$,
\vspace{-3ex}
$$
\Gamma_j^{(\beta)}(n)=\frac{n}{2}\log{\bigl(\frac{2}{\beta}\bigr)} \delta_{\{j=1\}}
+ \frac{1}{2^j}\sum_{i=1}^n \psi^{(j-1)}\biggl(\frac{i\beta}{2}\biggr)
$$
is of order $\frac{1}{2\beta}\log(n)$:
For $\beta=2$ we have already bounded these summands in the proof above. In the case $\beta=4$ use \eqref{legendre} and its derivatives to see, that the cumulant can be represented via sums of $\psi^{(j-1)}(i)$ and $\psi^{(j-1)}(i+1/2)$.
\end{remark}

\begin{remark}[Trace-fixed ensembles] 
In \cite{LeCaerDelannay:2003}, the authors considered fixed-trace Gaussian random matrix ensembles (real-symmetric and Hermitian ones).  
Here the trace of the matrix is kept constant with no other restriction on the matrix elements. These ensembles are shown to be equivalent as far
as finite moments of the matrix elements are concerned. Especially, the Mellin transform of the fixed-trace Gaussian matrices can be deduced from
the Mellin transform of the Gaussian orthogonal and unitary ensemble, respectively, see \cite[formulas (17), (20) and (22)]{LeCaerDelannay:2003}.
Hence it is expected that the distribution of the log-determinant of these ensembles is asymptotically Gaussian with a variance of order $\log n$.
We would be able to deduce the results in Theorem \ref{main2} for the Gaussian trace-fixed ensembles by the same technique. We omit
the details. Remark, that universal limits for the eigenvalue correlation functions in the bulk of the spectrum for fixed trace matrix ensembles
are considered in \cite{GoetzeGordin:2008}, \cite{GoetzeGordinLewina:2007}. In this case, the class of matrices are of nondeterminantal structure. 
\end{remark}


\providecommand{\MRhref}[2]{%
  \href{http://www.ams.org/mathscinet-getitem?mr=#1}{#2}
}
\providecommand{\href}[2]{#2}


\begin{thebibliography}{10}

\bibitem{Zeitounibook}
G.~W. Anderson, A.~Guionnet, and O.~Zeitouni, \emph{An introduction to random
  matrices}, Cambridge Studies in Advanced Mathematics, vol. 118, Cambridge
  University press, 2010. \MR{2760897}

\bibitem{Anderson:1971}
T.~W. Anderson, \emph{An introduction to multivariate statistical analysis},
  second ed., Wiley Series in Probability and Mathematical Statistics:
  Probability and Mathematical Statistics, John Wiley \& Sons Inc., New York,
  1984.

\bibitem{AndrewsGouldenJackson:2003}
G.~E. Andrews, I.~P. Goulden, and D.~M. Jackson, \emph{Determinants of random
  matrices and {J}ack polynomials of rectangular shape}, Stud. Appl. Math.
  \textbf{110} (2003), no.~4, 377--390.

\bibitem{Dembo:1989}
A.~Dembo, \emph{On random determinants}, Quart. Appl. Math. \textbf{47} (1989),
  no.~2, 185--195.

\bibitem{Dembo/Zeitouni:LargeDeviations}
A.~Dembo and O.~Zeitouni, \emph{{Large Deviations Techniques and
  Applications}}, Springer, New York, 1998.

\bibitem{DoeringEichelsbacher:2010}
H.~D\"oring and P.~Eichelsbacher, \emph{Moderate deviations via cumulants},
  Journal of Theoretical Probability (2012), doi 10.1007/s10959-012-0437-0.

\bibitem{DoeringEichelsbacher:2012}
\bysame, \emph{Moderate deviations for the determinant of {W}igner matrices},
  to appear in Limit Theorems in Probability, Statistics and Number Theory,
  Springer Proceedings in Mathematics \& Statistics, dedicated to Friedrich
  G\"otze on the occasion of his sixtieth birthday., 2013.

\bibitem{DoeringEichelsbacher:2011}
\bysame, \emph{Moderate deviations for the eigenvalue counting function of
  {W}igner matrices}, arXiv:1104.0221, to appear in
  Lat.~Am.~J.~Probab.~Math.~Stat., 2013.

\bibitem{Girko:1979}
V.~L. Girko, \emph{A central limit theorem for random determinants}, Teor.
  Veroyatnost. i Primenen. \textbf{24} (1979), no.~4, 728--740.

\bibitem{Girko:1998}
\bysame, \emph{A refinement of the central limit theorem for random
  determinants}, Teor. Veroyatnost. i Primenen. \textbf{42} (1997), no.~1,
  63--73.

\bibitem{Goodman:1963}
N.~R. Goodman, \emph{The distribution of the determinant of a complex {W}ishart
  distributed matrix}, Ann. Math. Statist. \textbf{34} (1963), 178--180.

\bibitem{GoetzeGordin:2008}
F.~G{\"o}tze and M.~Gordin, \emph{Limit correlation functions for fixed trace
  random matrix ensembles}, Comm. Math. Phys. \textbf{281} (2008), no.~1,
  203--229.

\bibitem{GoetzeGordinLewina:2007}
F.~G\"otze, M.~I. Gordin, and A.~Levina, \emph{The limit behavior at zero of
  correlation functions of random matrices with a fixed trace}, Zap. Nauchn.
  Sem. S.-Peterburg. Otdel. Mat. Inst. Steklov. (POMI) \textbf{341} (2007),
  no.~Veroyatn. i Stat. 11, 68--80, 230.

\bibitem{DelannayLeCaer:2000}
G.~Le~Ca{\"e}r and R.~Delannay, \emph{Distribution of the determinant of a
  random real-symmetric matrix from the {G}aussian orthogonal ensemble}, Phys.
  Rev. E (3) \textbf{62} (2000), no.~2, part A, 1526--1536.

\bibitem{LeCaerDelannay:2003}
\bysame, \emph{The distributions of the determinant of fixed-trace ensembles of
  real-symmetric and of {H}ermitian random matrices}, J. Phys. A \textbf{36}
  (2003), no.~38, 9885--9898.

\bibitem{Lebedev:1965}
N.~N. Lebedev, \emph{Special functions and their applications}, Revised English
  edition. Translated and edited by Richard A. Silverman, Prentice-Hall Inc.,
  Englewood Cliffs, N.J., 1965.

\bibitem{Mehta:RandomMatrices}
M.~L. Mehta, \emph{Random matrices}, third ed., Pure and Applied Mathematics
  (Amsterdam), vol. 142, Elsevier/Academic Press, Amsterdam, 2004.

\bibitem{MehtaNormand:1998}
M.~L. Mehta and J.-M. Normand, \emph{Probability density of the determinant of
  a random {H}ermitian matrix}, J. Phys. A \textbf{31} (1998), no.~23,
  5377--5391.

\bibitem{NguyenVu:2011}
H.H. Nguyen and V.~Vu, \emph{Random matrices: Law of the determinant},
  preprint, arXiv:1112.0752, 2011.

\bibitem{Prekopa:1967}
A.~Pr{\'e}kopa, \emph{On random determinants. {I}}, Studia Sci. Math. Hungar.
  \textbf{2} (1967), 125--132.

\bibitem{RSS:1978}
R.~Rudzkis, L.~Saulis, and V.~Statuljavi{\v{c}}us, \emph{A general lemma on
  probabilities of large deviations}, Litovsk. Mat. Sb. \textbf{18} (1978),
  no.~2, 99--116, 217.

\bibitem{SaulisStratulyavichus:1989}
L.~Saulis and V.~A. Statulevi{\v{c}}ius, \emph{Limit theorems for large
  deviations}, Mathematics and its Applications (Soviet Series), vol.~73,
  Kluwer Academic Publishers Group, Dordrecht, 1991, Translated and revised
  from the 1989 Russian original. \MR{MR1171883 (93e:60055b)}

\bibitem{SzekeresTuran:1937}
G.~Szekeres and P.~Tur\'an, \emph{On an extremal problem in the theory of
  determinants}, Math. Naturwiss. Am. Ungar. Akad. Wiss. \textbf{56} (1937),
  796--806.

\bibitem{TaoVu:2006}
T.~Tao and V.~Vu, \emph{On random {$\pm1$} matrices: singularity and
  determinant}, Random Structures Algorithms \textbf{28} (2006), no.~1, 1--23.

\bibitem{Tao/Vu:2009}
\bysame, \emph{Random matrices: universality of local eigenvalue statistics},
  Acta Math. \textbf{206} (2011), 127--204.

\bibitem{TaoVu:2011}
\bysame, \emph{A central limit theorem for the determinant of a {W}igner
  matrix}, Adv. in Math. \textbf{231} (2012), no.~1, 74 -- 101.

\bibitem{Trotter:1984}
H.~F. Trotter, \emph{Eigenvalue distributions of large {H}ermitian matrices;
  {W}igner's semicircle law and a theorem of {K}ac, {M}urdock, and {S}zeg{\H
  o}}, Adv. in Math. \textbf{54} (1984), no.~1, 67--82.

\end{thebibliography}
\newcommand{\SortNoop}[1]{}\def\cprime{$'$} \def\cprime{$'$}
  \def\polhk#1{\setbox0=\hbox{#1}{\ooalign{\hidewidth
  \lower1.5ex\hbox{`}\hidewidth\crcr\unhbox0}}}
\providecommand{\bysame}{\leavevmode\hbox to3em{\hrulefill}\thinspace}
\providecommand{\MR}{\relax\ifhmode\unskip\space\fi MR }
\providecommand{\MRhref}[2]{%
  \href{http://www.ams.org/mathscinet-getitem?mr=#1}{#2}
}
\providecommand{\href}[2]{#2}

\end{document}